\documentclass[a4paper,oneside]{amsart}
\usepackage{graphicx}

\usepackage{amsmath}
\usepackage{xcolor}
\usepackage{amsthm}
\usepackage{graphicx}
\usepackage{amsfonts}
\usepackage{bbm}

\newtheorem{definition}{Definition}[section]
\newtheorem{theorem}{Theorem}[section]
\newtheorem{corollary}[theorem]{Corollary}
\newtheorem*{remark}{Remark}
\newtheorem{lemma}[theorem]{Lemma}
\usepackage{mathtools}
\usepackage{mathabx}
\usepackage{geometry}

\newcommand{\N}{\mathbb{N}}
\newcommand{\R}{\mathbb{R}}

\newcommand{\lnn}{\ell^2(\mathbb{N})}

\DeclareMathOperator*{\argmin}{arg\,min}
\DeclareMathOperator\supp{supp}
\DeclareMathOperator\diag{diag}
\title{Infinite dimensional generative sensing}
\author{Paolo Angella} 
\address{MaLGa Center, DIMA, University of Genoa, Via Dodecaneso  35, 16146, Genova, Italy}
\email{paolo.angella@edu.unige.it}

\author{Vito Paolo Pastore} 
\address{MaLGa Center, DIBRIS, University of Genoa, Via Dodecaneso  35, 16146, Genova, Italy}
\email{vito.paolo.pastore@unige.it}
\author{Matteo Santacesaria}
\address{MaLGa Center, DIMA, University of Genoa, Via Dodecaneso  35, 16146, Genova, Italy}
\email{matteo.santacesaria@unige.it}
\keywords{inverse problems, compressed sensing, generative models, deep learning, sparsity.}
\usepackage[colorlinks=true, allcolors=blue]{hyperref}
\begin{document}

\begin{abstract}
Deep generative models have become a standard for modeling priors for inverse problems, going beyond classical sparsity-based methods. However, existing theoretical guarantees are mostly confined to finite-dimensional vector spaces, creating a gap when the physical signals are modeled as functions in Hilbert spaces. This work presents a rigorous framework for generative compressed sensing in Hilbert spaces. We extend the notion of local coherence in an infinite-dimensional setting, to derive optimal, resolution-independent sampling distributions. Thanks to a generalization of the Restricted Isometry Property, we show that stable recovery holds when the number of measurements is proportional to the prior's intrinsic dimension (up to logarithmic factors), independent of the ambient dimension. Finally, numerical experiments on the Darcy flow equation validate our theoretical findings and demonstrate that in severely undersampled regimes, employing lower-resolution generators acts as an implicit regularizer, improving reconstruction stability.
\end{abstract}

\maketitle


\section{Introduction}
The recovery of high dimensional signals from limited measurements is a fundamental challenge in signal processing, medical imaging, and scientific computing. Classical compressed sensing theory \cite{candes2006robust, donoho2006compressed} revolutionized this field by establishing that signals admitting a sparse representation in some basis can be recovered from a number of linear measurements proportional to their sparsity, rather than their ambient dimension, up to a logarithmic factor.

With the advent of machine learning, the rigidity of the sparsity assumption has been challenged by the emergence of deep generative models \cite{kingma2013auto, goodfellow2014generative, sohl2015deep}.
A substantial body of work in the literature has demonstrated that generative priors where the unknown signal is restricted to lie in the range of a low dimensional generator $G\colon \mathbb{C}^k \to \mathbb{C}^N$ can outperform sparsity-based classical methods, often requiring significantly fewer measurements \cite{pmlr-v70-bora17a,9943805,Berk22, berk2023model,adcock2023unified}. This aligns with the manifold hypothesis \cite{fefferman2016testing}: signals concentrate on low-dimensional geometries that exhibit deformation stability and anisotropic regularity \cite{mallat2012group, grohs2014parabolic}.

In the finite dimensional setting, theoretical guarantees for these approaches rely on Restricted Isometry Properties \cite{candes2005decoding, baraniuk2008simple}, which ensure that the measurement operator preserves in some way the geometry of the generator's range.
The problem lies in the fact that many inverse problems of practical interest such as those arising in Partial Differential Equations, Magnetic Resonance Imaging, or Computed Tomography are inherently infinite dimensional \cite{engl1996regularization}.
The underlying signals are functions in Hilbert spaces, not fixed-length vectors as the literature usually assumes.

Discretizing these problems \textit{a priori} leads to distinct theoretical and practical issues, such as the ``inverse crime'' or a dependency on mesh resolution that obscures the true complexity of the signal \cite{alberti2022infinite}.
This makes the generalization of the deep prior approach to compressed sensing dependent on a proper infinite-dimensional formulation. That is exactly the objective of this work.

We model the inverse problem as recovering an unknown function $x_0 \in \ell^2(\mathbb{N})$ from noisy, weighted evaluations of a unitary operator $\mathcal{F}$ (e.g., the Fourier transform).
We assume $x_0$ lies in the range of a Generalized Generative Network $G\colon \mathbb{C}^k \to \ell^2(\mathbb{N})$, a class of deep ReLU networks acting between finite dimensional spaces with a final linear layer that has $\ell^2(\N)$ as co-domain.
Taking the finite-dimensional results of \cite{berk2023model} as our starting point, this work generalizes their framework to the infinite-dimensional setting.

In this paper we give the following contributions:
\begin{itemize}
    \item We introduce an infinite dimensional notion of local coherence, which quantifies the alignment between the generative model's range and the measurement basis. We derive an optimal, coherence based sampling distribution $p^*$ that minimizes the sample complexity.
    \item We establish a Gen-RIP for infinite dimensional operators. We prove that the number of samples $m$ required for a stable embedding of the generator's range scales logarithmically with the intrinsic dimension $k$ and polynomially with the local coherence $\mu_\mathcal{U}$, independent of the ambient dimension.
    \item We extend our analysis to consider probability distributions that are not supported on the entire domain. We introduce a balancing property to control the approximation error in these practical scenarios.
    \item  Through numerical experiments on Darcy flow, we demonstrate that coherence based sampling significantly outperforms uniform sampling. Furthermore, we uncover a counter intuitive phenomenon where lower resolution generators yield superior reconstruction accuracy in undersampled regimes, suggesting that limited resolution acts as a beneficial implicit regularizer.
\end{itemize}

Finally, we remark that while our framework is formulated in an infinite-dimensional ambient space, the generative priors considered here are inherently finite-dimensional, mapping a low-dimensional latent code to a finite-dimensional subspace of $\ell^2(\mathbb{N})$.

\subsection{Related work}

\textit{Infinite dimensional compressed sensing.}
The transition from finite to infinite dimensional compressed sensing was driven by the need to avoid the so called inverse crime and the discretization errors found in classical formulations. The theory of generalized sampling established that stable recovery of functions is possible using genuinely infinite dimensional measurement models, provided that discretization is handled properly \cite{adcock2012generalized} (see \cite{alberti2022infinite} for extensions to nonlinear inverse problems). Based on this, a theory of compressed sensing in infinite dimension was developed \cite{adcock2016generalized,adcock2017breaking,adcock2018infinite,adcock2022sparse,adcock_efficient_2024}. This framework has since been extended to a variety of bases and sampling strategies. In particular, rigorous recovery guarantees have been derived for sparse expansions in Legendre polynomial bases \cite{rauhut2012sparse}, as well as for sampling schemes exhibiting local or structured behavior, for which oracle type local recovery guarantees have been established \cite{adcock2020oracle}. More recently, a unified theory of infinite dimensional compressed sensing for inverse problems has been developed \cite{alberti2023compressed,alberti2025compressed}. This framework enables the recovery of signals from local measurements (such as those arising from the Radon transform) without discretizing the forward operator, although the associated guarantees primarily focus on wavelet-based sparsity rather than learned generative priors.

\textit{Infinite dimensional generative models.}
The extension of deep generative models to function spaces has emerged as an active area of research, motivated by the need for resolution invariant representations. While early approaches largely focused on scaling discrete architectures, more recent work formulates generative modeling directly in Hilbert spaces. Representative examples include infinite dimensional Variational Autoencoders and score-based diffusion models defined on function spaces, which aim to learn probability measures over infinite dimensional domains \cite{franzese2025generative, bond-taylor2024inftydiff}. Particularly relevant to our setting is the class of Continuous Generative Neural Networks \cite{alberti2022continuous}, and of Functional Autoencoders \cite{bunker2025autoencoders}, which are explicitly designed to learn resolution independent mappings between finite dimensional latent spaces and function spaces (e.g., $L^2(\mathbb{R}^d)$). This makes them a natural candidate for the generator~$G$ in our theoretical framework.

\textit{Implicit Generative Priors.} In the last few years, score-based generative models and diffusion probabilistic models have emerged as a dominant paradigm for solving inverse problems \cite{song2021scorebased,chung2023diffusion}. Unlike the explicit low-dimensional generator $G: \mathbb{C}^k \to \mathcal{H}$ employed in our framework, these methods typically utilize an implicit prior given by score function, enabling high-quality reconstruction through stochastic sampling processes such as Langevin dynamics (see \cite{daras2024survey} for a recent review). While these approaches often achieve state-of-the-art perceptual quality, our work focuses on the rigorous recovery guarantees enabled by the deterministic geometry of explicit low-dimensional manifolds.

\textit{Operator Learning and Scientific Machine Learning.}
Our work is also relevant to the rapidly growing field of scientific machine learning, and in particular to operator learning \cite{boulle2024mathematical}, where neural networks are trained to approximate mappings between infinite dimensional function spaces. Architectures such as the Fourier Neural Operator (FNO) \cite{darcy-dataset} and DeepONet \cite{lu2021learning} have demonstrated strong empirical performance in the solution of parametric partial differential equations. See \cite{lanthaler2022error,kovachki2023neural} for error analysis and approximation theory results. These methods share our goal of resolution independence, but they typically rely on large supervised datasets to learn the forward or inverse operator directly. In contrast, our theory offers a complementary perspective, with the potential to inform optimal data acquisition strategies for training neural operators or for recovering inputs in data-scarce regimes.

\subsection{Structure of the paper}
The rest of the paper is organized as follows. Section~\ref{settupp} formulates the infinite-dimensional inverse problem and defines the class of Generalized Generative Networks used as priors. We introduce the concept of infinite-dimensional local coherence to derive optimal sampling strategies and establish the balancing property to handle non-admissible distributions, and we present our main recovery guarantees. Section~\ref{sec:numerics} validates this framework numerically by applying Functional Autoencoders \cite{bunker2025autoencoders} to the Darcy flow problem. Section~\ref{sec:proofs} contains the proofs of the main results, including the derivation of the Generalized Restricted Isometry Property (Gen-RIP) and the error bounds for the recovered signal. Finally, Section~\ref{sec:conclusions} summarizes the contributions and discusses limitations and open directions.

\section{Setup and Main Results}\label{settupp}

In this section, we establish the mathematical framework for solving inverse problems in infinite dimensional spaces using generative priors. Suppose we have a physical process modeled by an infinite dimensional unitary operator $F \colon \lnn \longmapsto \lnn$. Our objective is to recover an unknown signal $x_0 \in \lnn$ given a noisy measurement vector $b$, such that:
\begin{equation*}
    b=SDFx_0 + \eta,
\end{equation*}
where $S$ is a subsampling operator, $D$ a weighting operator (both defined below) and $\eta$ represents measurement noise.

Classical Compressed Sensing relies on sparsity in a fixed basis. Here, we depart from sparsity and instead assume that the signal $x_0$ lies near the range of a deep generative model. Specifically, we constrain the solution to reside in the range of a generator:
\begin{equation*}
    G \colon \mathbb{C}^k \longmapsto \lnn.
\end{equation*}
To formalize this prior, we introduce the definition of a generalized generative network suitable for our setting.

\begin{definition}[$(k,d)$-Generalized Generative Network]
    Fix the integers $2\leq k \coloneqq k_0 \leq ... \leq k_d$. We fix weight matrices $W^{(i)} \in \mathbb{C}^{k_i \times k_{i-1}}$, as well as a linear map $W \colon \mathbb{C}^{k_d} \longmapsto \lnn$. We define a $(k,d)$-Generalized Generative Network as a function $G\colon\mathbb{C}^k \longmapsto \lnn$ given by:
    \begin{equation*}
        G(z) \coloneqq W \sigma \left( W^{(d)} \sigma\left( \cdot\cdot\cdot W^{(2)} \sigma \left( W^{(1)} z \right)\right) \right),
    \end{equation*}
    where $\sigma$ is the ReLU activation function, defined as:
    \begin{equation*}
        \sigma(x) = \max(0, x).
    \end{equation*}
\end{definition}
This is the natural extension of classical generative networks whose codomain is finite dimensional, the only difference is that we apply, as the final layer, a linear map $W$ into an infinite dimensional space. Crucially, we restrict the intermediate linear maps $W^{(i)}$ to finite-dimensional codomains. Allowing them to map to infinite-dimensional spaces would render the generator's range infinite-dimensional, thereby invalidating the theoretical argument presented in the sequel.
In an infinite dimensional setting, uniform sampling is not achievable. Therefore, we must adopt a variable density sampling strategy. We introduce a probability distribution $p$ on $\mathbb{N}$ which tells us how indices are selected. To analyze this process, we define the associated sampling, weighting, and projection operators.
\begin{definition}
Let $p$ be a probability distribution on $\mathbb{N}$. Let $\{e_j\}_{j \in \mathbb{N}}$ denote the canonical basis of $\ell^2(\mathbb{N})$.
We define the sampling operator $S: \ell^2(\mathbb{N}) \to \mathbb{C}^m$ as the linear map composed of $m$ i.i.d. random vectors $s_1, \dots, s_m$ in $\ell^2(\mathbb{N})$, such that
\begin{equation*}
\mathbb{P}(s_i = e_j) = p_j, \quad \forall j \in \mathbb{N}, \forall i \in [m],
\end{equation*}
where we have denoted $[m]=\{1,\dots,m\}$. The action of $S$ is defined component-wise by the inner product:
\begin{equation*}
(Sx)_i = \langle x, s_i \rangle, \quad \forall i \in [m].
\end{equation*}
\end{definition}


To ensure that our estimator remains unbiased and to normalize the energy of the sampled components, we utilize a diagonal weighting operator $D$.

\begin{definition}
    Given $p$ a probability distribution of $\mathbb{N}$ we define a weight operator $D:\lnn \longmapsto \lnn$ so that
    \begin{equation*}
        (Dx)_i = \begin{cases} \frac{x_i}{\sqrt{p_i}}, & \text{for } p_i>0 \\ 0, & \text{for } p_i=0 \\ \end{cases}
    \end{equation*}
\end{definition}

Furthermore, we identify the support of our sampling distribution via the operator $\hat{I}$. This operator essentially acts as a projection onto the indices that have a non-zero probability of being sampled.

\begin{definition}
    Given $p$ a probability distribution of $\mathbb{N}$ we define the operator $\hat{I}:\lnn \longmapsto \lnn$
    \begin{equation*}
        \hat{I} \coloneqq \diag(\mathbbm{1}_{\supp p}).
    \end{equation*}
\end{definition}

With these operators in place, our recovery strategy is formulated as an optimization problem. We seek a latent vector $\hat{z}$ that minimizes the data consistency error in the measurement domain:
\begin{equation*}
    \hat{z} \in \argmin_{z \in {\mathbb{C}} ^{k}} \| b - SDF G(z) \|.
\end{equation*}
We outline that $D$ here is simply a weight operator needed to balance $p$ since in general it will lead to a nonuniform sampling. The recovered signal is then given by:
\begin{equation*}
    \hat{x} = G(\hat{z}).
\end{equation*}
Our ultimate analytical goal is to derive an upper bound on the reconstruction error $\| \hat{x} - x_0 \|$.
The overall setup is similar to the finite dimensional case, with the key difference that we can handle both the infinite dimensional nature of the data and probability distributions that are not supported on the entire domain.
In particular, the ability to deal with distributions that fail to sample certain relevant coordinates is also important in the finite dimensional setting, but it has not been addressed in previous works in the literature.

\subsection{Local Coherence and Sampling Strategies}

To guarantee a better recovery rate, the sampling distribution $p$ must be matched to the ``geometry" of the operator $F$ relative to the signal model. If $F$ maps the signal model to specific coordinates in $\lnn$, we must sample those coordinates with higher probability. This alignment is captured by the concept of \textit{local coherence}, borrowed from classical compressed sensing \cite{candes2007sparsity} and extended to generative models in \cite{Berk22}.

Let $\mathcal{U}\subseteq\lnn$ be a union of $N$ convex cones $\bigcup_{i = 1}^N \mathcal{C}_i$. We define $\Delta(\mathcal{U}) \coloneqq \bigcup_{i = 1}^N \text{span}(\mathcal
{C}_i)$. As for intuition, we should think about $\mathcal{U}$ as the range of our generator
\begin{definition}\label{local-coherence}
    Let $\{e_i\}_{i\in\mathbb{N}}$ denote the canonical basis of $\ell^2(\mathbb{N})$. Given a unitary operator $F$ on $\ell^2(\mathbb{N})$, we define its $i$-th row as the vector $F_i := F^* e_i$.
    Given a finite union of convex cones $\mathcal{U}\subseteq\ell^2(\mathbb{N})$ contained in a finite-dimensional subspace, we define the local coherence $(\alpha_i)_{i\in\mathbb{N}}$ of $\mathcal{U}$ with respect to $F$ as
    \begin{equation} \label{def:loccoh}
        \alpha_i = \sup_{x\in \Delta(\mathcal{U}) ,\|x\|=1} |\langle x, F_i\rangle| = \sup_{x\in \Delta(\mathcal{U}) ,\|x\|=1} |(Fx)_i|.
    \end{equation}
\end{definition}

We can show that this is well defined in our framework of finite dimensional subspaces with the following lemma.
\begin{lemma}\label{alpha-in-l2}
    Let $F \colon \lnn \longmapsto \lnn$ be a unitary operator and let $\mathcal{U}\subseteq\ell^2(\mathbb{N})$ be a finite union of convex cones contained in a subspace of dimension $r$, $\alpha=(\alpha_i)_{i\in\mathbb{N}} $ the local coherence of $\mathcal{U}$ as defined in \eqref{def:loccoh}. Then
    \begin{equation*}
        \alpha \in \lnn \quad \text{and} \quad\| \alpha \|^2 \leq r^2.
    \end{equation*}
\end{lemma}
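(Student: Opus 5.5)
The plan is to reduce the supremum over the union $\Delta(\mathcal{U})$ to a finite sum of norms of projections. Write $\Delta(\mathcal{U}) = \bigcup_{j=1}^N V_j$ with $V_j \coloneqq \text{span}(\mathcal{C}_j)$. Each $V_j$ is contained in the ambient subspace $V$ of dimension $r$, so $\dim V_j \le r$. Actually, it is cleaner to work directly with $V$: since $\Delta(\mathcal{U}) \subseteq V$, we have
\begin{equation*}
\alpha_i \le \sup_{x\in V,\ \|x\|=1} |\langle x, F_i\rangle| = \|P_V F_i\|,
\end{equation*}
where $P_V$ is the orthogonal projection onto $V$. The equality is the standard fact that the supremum of a linear functional over the unit sphere of a finite-dimensional subspace is attained, at $x = P_V F_i/\|P_V F_i\|$. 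This also shows that each $\alpha_i$ is finite, so the definition makes sense.

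Next, fix an orthonormal basis $v_1,\dots,v_r$ of $V$. Then
\begin{equation*}
\|P_V F_i\|^2 = \sum_{l=1}^r |\langle F_i, v_l\rangle|^2 = \sum_{l=1}^r |(F v_l)_i|^2,
\end{equation*}
using $\langle v_l, F^* e_i\rangle = \langle F v_l, e_i\rangle$. Summing over $i\in\mathbb{N}$ and exchanging the sums, which is legitimate because all terms are nonnegative (Tonelli), gives
\begin{equation*}
\|\alpha\|^2 \le \sum_{l=1}^r \sum_{i} |(Fv_l)_i|^2 = \sum_{l=1}^r \|F v_l\|^2 = r,
\end{equation*}
since $F$ is unitary and hence an isometry. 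Because $r \le r^2$ for $r \ge 1$, this yields $\alpha\in\ell^2(\mathbb{N})$ with $\|\alpha\|^2\le r^2$; the case $r=0$ is trivial.

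There is no real obstacle, since the bound is loose. The only care needed is in two places. First, the supremum is over unit vectors in the union of the spans, not over $\mathcal{U}$ itself; the inclusion of this union in $V$ handles this. Second, the interchange of summations must be justified, which nonnegativity does. If one insisted on the stated exponent arising naturally, an alternative is to bound each $\alpha_i^2 \le \sum_j \|P_{V_j}F_i\|^2$ and sum over the $N$ subspaces. However, that would introduce a factor $N$, so the single-subspace argument above is the one I would use.
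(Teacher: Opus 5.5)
Your proof is correct and follows essentially the same route as the paper: expand in an orthonormal basis of the $r$-dimensional space, interchange the sums, and use unitarity of $F$ (Parseval) to get $\sum_j |\langle F_j,\varphi_i\rangle|^2 = 1$ for each basis vector. There are two small differences, both in your favour. First, you bound $\alpha_i$ by $\|P_V F_i\|$ exactly (Cauchy--Schwarz against the unit coefficient vector), which gives the sharper $\|\alpha\|^2\le r$; the paper instead uses the cruder $(\sum_{i=1}^r a_i)^2\le r\sum_i a_i^2$ and lands at $r^2$. Second, you pass from the union $\Delta(\mathcal{U})$ to the ambient subspace by an inequality, whereas the paper writes this step as an equality by treating $\Delta(\mathcal{U})$ as if it were itself a subspace with a basis.
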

    This is trivial in the finite case but not in the infinite dimensional one.
The local coherence $\alpha_i$ quantifies the maximum correlation between the $i$-th measurement basis vector and any normalized difference vector in the model set. A sampling distribution is considered ``good'' if it sufficiently covers indices with high coherence.

\begin{definition}
    Given $F$ unitary in $\lnn$ and a finite union of convex cones $\mathcal{U}\subseteq\ell^2(\mathbb{N})$ contained in a finite-dimensional subspace, and $p$ a probability distribution on $\mathbb{N}$. Let $(\alpha_i)_{i\in\mathbb{N}}$ be the local coherence of $F$ with respect to $\mathcal{U}$. We will call $\mu_{\mathcal{U}}(F,p)$ the smallest constant for which
    \begin{equation*}\label{bound}
        \alpha_j \leq \mu_{\mathcal{U}}(F,p) \sqrt{p_j} \quad \forall j \colon p_j > 0
    \end{equation*}
    holds. If $\mu_{\mathcal{U}}(F,p)$ is such that
    \begin{equation*}
        \alpha_j \leq \mu_{\mathcal{U}}(F,p) \sqrt{p_j} \quad \forall j \in \mathbb{N},
    \end{equation*}
    we will say that $p$ is \textbf{admissible}.
\end{definition}

An admissible distribution ensures that no indices with non zero coherence are ignored. The following lemma identifies the optimal sampling distribution that minimizes the required sample complexity.

\begin{lemma}\label{best-admissible}
    Given $F$ unitary in $\lnn$ and a finite union of convex cones $\mathcal{U}\subseteq\ell^2(\mathbb{N})$ contained in a finite-dimensional subspace, the minimum of $\mu_{\mathcal{U}}(F,p)$ over all admissible $p$ is achieved by $p^*$ where
    \begin{equation}\label{pstar}
        p^*_j = \frac{\alpha^2_j}{\|\alpha\|^2},
    \end{equation}
    in which case $\mu_{\mathcal{U}}(F,p^*)^2 = \|\alpha\|^2$.
\end{lemma}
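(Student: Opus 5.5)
The plan is to prove a lower bound valid for every admissible $p$, and then check that $p^*$ attains it. Lemma~\ref{alpha-in-l2} gives $\alpha\in\lnn$, so $\|\alpha\|<\infty$. Before anything else, note that $\alpha\neq 0$ whenever $\Delta(\mathcal{U})\neq\{0\}$. Indeed, if $x\in\Delta(\mathcal{U})$ has $\|x\|=1$, then $\|Fx\|=1$ by unitarity, so some coordinate $(Fx)_i$ is nonzero, and hence $\alpha_i>0$. It follows that $p^*$ in \eqref{pstar} is a well-defined probability distribution on $\N$: it is nonnegative and $\sum_j p^*_j=\|\alpha\|^2/\|\alpha\|^2=1$. (In the degenerate case $\mathcal{U}=\{0\}$, every $p$ is admissible with $\mu=0$ and there is nothing to prove.)

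\emph{Lower bound.} Let $p$ be admissible with constant $\mu=\mu_{\mathcal{U}}(F,p)$, so that $\alpha_j\le \mu\sqrt{p_j}$ for all $j\in\N$. Squaring and summing over $j$, and using $\sum_j p_j=1$, gives
\begin{equation*}
\|\alpha\|^2=\sum_{j\in\N}\alpha_j^2\le \mu^2\sum_{j\in\N}p_j=\mu^2 .
\end{equation*}
So every admissible $p$ satisfies $\mu_{\mathcal{U}}(F,p)^2\ge\|\alpha\|^2$. Admissibility is exactly what allows the sum to run over all $j$. Without it, indices with $p_j=0$ but $\alpha_j>0$ would escape the bound, which is why the minimization is restricted to admissible distributions.

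\emph{Attainment.} For $p^*$ we have $\sqrt{p^*_j}=\alpha_j/\|\alpha\|$, so $\alpha_j=\|\alpha\|\sqrt{p^*_j}$ for every $j\in\N$. This has two consequences. First, $p^*$ is admissible, including at indices with $p^*_j=0$, since there $\alpha_j=0$. Second, the constant $\|\alpha\|$ works. Because $\alpha\neq0$, equality holds at some $j$ with $p^*_j>0$, so no smaller constant can satisfy the defining inequality. Hence $\mu_{\mathcal{U}}(F,p^*)=\|\alpha\|$, which matches the lower bound, and $p^*$ is a minimizer.

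\emph{Main obstacle.} The only genuinely infinite-dimensional point is making sure $p^*$ is a legitimate probability distribution. This requires $\|\alpha\|<\infty$, which is supplied by Lemma~\ref{alpha-in-l2}, together with the nondegeneracy $\alpha\neq0$ handled above. Once these are in place, the rest is the single Cauchy--Schwarz-free summation argument.
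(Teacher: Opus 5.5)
Your proposal is correct and follows the same route as the paper: square the admissibility inequality $\alpha_j\le\mu_{\mathcal{U}}(F,p)\sqrt{p_j}$, sum over all $j\in\N$, and use $\sum_j p_j=1$ to get $\|\alpha\|^2\le\mu_{\mathcal{U}}(F,p)^2$. The paper's proof gives only this lower bound, so your additional checks fill in steps it leaves implicit: that $p^*$ is a well-defined probability distribution (via Lemma~\ref{alpha-in-l2} and $\alpha\neq 0$), and that it actually attains $\mu_{\mathcal{U}}(F,p^*)=\|\alpha\|$.
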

We observe that the optimal distribution $p^*$ corresponds to sampling according to the normalized Christoffel function (often referred to as statistical leverage scores). This strategy is fundamental to optimal least squares approximation, as analyzed in \cite{cohen2017optimal} and recently reviewed in \cite{adcock2025optimal}.\smallskip

Our analysis often requires the assumption $\mu_\mathcal{U}(F,p) \geq 1$. As shown in Lemma \ref{coherence-energy-bound} below, this condition is mild; it effectively rules out pathological cases (e.g., vanishing coherence) and provides a practical criterion for numerical verification. Moreover, Lemma \ref{admiss>1} establishes that this condition holds automatically whenever $p$ is admissible.

\subsection{Recovery Guarantees}

We state our main recovery result. This theorem provides an error bound for the recovered signal $\hat{x}$. It distinguishes between the general case (where the sampling might ignore some active indices) and the ideal case where the distribution is admissible.

We adopt standard Minkowski notation for set operations; specifically, for a given set $A$, we denote its difference set as $A - A \coloneqq \{ a - b \mid a, b \in A \}$.

\begin{theorem}\label{thm:main-recovery-merged}
    Let $F$ be a unitary operator on $\ell^2(\mathbb{N})$, and let $p$ be a probability distribution on $\mathbb{N}$ with associated operators $S$, $D$, and $\hat{I}$. Let $G$ be a $(k, d)$-generalized generative network, and let $\mathcal{T} = R(G) - R(G)$ be its difference set, such that $\mu_{\mathcal{T}}(F,p) \geq 1$. If the number of measurements $m \in \mathbb{N}$ satisfies
    \begin{equation*}
        m \geq 16\mu_{\mathcal{T}}(F,p)^2  \left[ 2kd \log\left(\frac{2e k_d}{k}\right) + \log\left(\frac{4k}{\varepsilon}\right) \right],
    \end{equation*}
    then, with probability at least $1-\varepsilon$, for any $x_0 \in \ell^2(\mathbb{N})$, $b \coloneqq SDFx_0 + \eta$, $x^\perp \coloneqq  x_0 - \Pi_{R(G)}(x_0)$, and estimator $\hat{x} \in R(G)$ satisfying  $\lVert SDF\hat{x} - b \rVert \leq \min_{x\in R(G)} \lVert SDFx - b \rVert + \hat{\varepsilon}$, the following holds:
    \begin{equation}\label{equazione-madre}
        \lVert \hat{x} - x_0 \rVert \leq \lVert x^\perp \rVert + \sqrt{\frac{2}{m}} \Big[2 \lVert SDF x^\perp \rVert + 2\lVert\eta \rVert +\hat{\varepsilon} \Big]+ \left\|\hat{I}^\perp F(\hat{x}-\Pi_{R(G)}(x_0))\right\|.
    \end{equation}
    Furthermore, if $p$ is an \textbf{admissible} distribution, the final tail term vanishes and the bound simplifies to:
    \begin{equation}\label{bound-admissible-simp}
        \lVert \hat{x} - x_0 \rVert \leq \lVert x^\perp \rVert +\sqrt{\frac{2}{m}} \Big[2 \lVert SDF x^\perp \rVert + 2\lVert\eta \rVert +\hat{\varepsilon} \Big].
    \end{equation}
\end{theorem}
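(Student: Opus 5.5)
The plan is to derive the bound from a restricted lower isometry (Gen-RIP) on the difference set $\mathcal{T}$, using the paper's operators $S$, $D$, $\hat I$. I proceed in three steps.

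\textbf{Step 1: the range geometry and the Gen-RIP event.} First I describe the range of $G$. Since each intermediate layer is finite dimensional and ReLU is piecewise linear, $R(G)$ is a union of at most $N\le (2ek_d/k)^{kd}$ polyhedral cones. This is the standard hyperplane-arrangement count, applied layer by layer as in \cite{berk2023model}. Each cone lies in the range of the final linear map $W$, which has dimension at most $k_d$. Hence $\mathcal{T}=R(G)-R(G)$ is contained in a union of at most $N^2$ subspaces, each of dimension at most $2k$, and the local coherence is well defined by Lemma \ref{alpha-in-l2}.

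Next, for a fixed such subspace $V$ with orthonormal basis $U\colon\mathbb{C}^{2k}\to V$, I consider the random matrix
\[
A=\frac1m U^*F^*D S^*SDFU=\frac1m\sum_{i=1}^m X_iX_i^*, \qquad X_i=\frac{U^*F^* s_i}{\sqrt{p_{j_i}}}.
\]
Two facts about $A$ are needed.
\begin{itemize}
\item Its mean is $\mathbb{E}A=U^*F^*\hat I F U$, because $D$ rescales exactly the sampled coordinates.
\item Each summand is bounded: $\|X_i\|^2\le \alpha_{j}^2/p_j\le\mu_{\mathcal{T}}(F,p)^2$, by the definition of coherence taken as a supremum over $\Delta(\mathcal{T})\supseteq V$.
\end{itemize}
Applying the matrix Chernoff inequality to the lower tail, with $\delta=1/2$, gives
\[
\lambda_{\min}\bigl(A-\tfrac12 \mathbb{E}A\bigr)\ge 0
\]
with failure probability at most $2k\,e^{-m/(c\mu^2)}$. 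I take a union bound over the $N^2$ subspaces and plug in $N$. Choosing $m$ as in the hypothesis, using $\mu\ge1$ and the constant $16$, makes the total failure probability at most $\varepsilon$; the $\log(4k/\varepsilon)$ term absorbs the dimension factor. On this event, for every $v\in\Delta(\mathcal{T})$,
\[
\tfrac1m\|SDFv\|^2\ \ge\ \tfrac12\|\hat I Fv\|^2 .
\]

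\textbf{Step 2: deterministic error bound.} I set $\bar x=\Pi_{R(G)}(x_0)$ and split
\[
\hat x-x_0=(\hat x-\bar x)-x^\perp .
\]
The vector $v=\hat x-\bar x$ lies in $\mathcal{T}$. I decompose $Fv=\hat I Fv+\hat I^\perp Fv$, so that
\[
\|v\|\le\|\hat IFv\|+\|\hat I^\perp Fv\|\le \sqrt{2/m}\,\|SDFv\|+\|\hat I^\perp Fv\|,
\]
using that $F$ is unitary. Then I bound $\|SDFv\|$ by the triangle inequality:
\[
\|SDFv\|\le \|SDF\hat x-b\|+\|b-SDF\bar x\|\le 2\|b-SDF\bar x\|+\hat\varepsilon,
\]
which uses the near-minimality of $\hat x$ with $\bar x$ as a competitor in $R(G)$. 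Finally, since $b-SDF\bar x=SDFx^\perp+\eta$, this gives $2\|SDFx^\perp\|+2\|\eta\|+\hat\varepsilon$. Combining these with $\|\hat x-x_0\|\le\|v\|+\|x^\perp\|$ yields \eqref{equazione-madre}.

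\textbf{Step 3: the admissible case.} If $p$ is admissible, then $p_j=0$ forces $\alpha_j=0$. So for every $v\in\Delta(\mathcal{T})$, $(Fv)_j=0$ off the support of $p$, and $\hat I^\perp Fv=0$. This removes the tail term and gives \eqref{bound-admissible-simp}.

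\textbf{Main obstacle.} The delicate step is Step 1. It needs the correct cone count and the reduction to finitely many finite-dimensional subspaces, so that matrix Chernoff applies despite the infinite ambient dimension. It also needs the matching of constants so that exactly $16\mu^2[2kd\log(2ek_d/k)+\log(4k/\varepsilon)]$ suffices.
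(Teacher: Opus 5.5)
\textbf{The gap: the relative lower bound in Step 1 fails for non-admissible $p$.} Your route is the paper's route: Gen-RIP on $\mathcal{T}$, then near-minimality with $\Pi_{R(G)}(x_0)$ as competitor, then the $\hat I/\hat I^\perp$ split of $Fv$. In the admissible case it is sound. There $\mathbb{E}A=U^*F^*\hat IFU=I$, so $\lambda_{\min}(A)\ge\frac12$ is exactly $A\succeq\frac12\mathbb{E}A$, and your Step 3 is correct. For non-admissible $p$, however, Step 1 does not hold. Matrix Chernoff only compares $\lambda_{\min}(A)$ with $\lambda_{\min}(\mathbb{E}A)$, with an exponent proportional to $m\lambda_{\min}(\mathbb{E}A)/\mu^2$. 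It does not give the Loewner bound $A\succeq\frac12\mathbb{E}A$ when $\mathbb{E}A\neq I$, and at the stated $m$ that bound genuinely fails.

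Here is a counterexample. Take $F=I$ and $u=\sqrt{\tau}\,e_2+\sqrt{1-\tau}\,e_3$. Let $G(z)=M\sigma(z)-M\sigma(-z)=Mz$ with $Mz=z_1e_1+z_2u$, so $k=2$, $d=1$, $k_1=4$ and $R(G)=\mathcal{T}=\mathrm{span}(e_1,u)$. Let $p_1=1-\tau$ and $p_2=\tau$.
\begin{itemize}
\item Then $\alpha=(1,\sqrt\tau,\sqrt{1-\tau},0,\dots)$ and $\mu_{\mathcal{T}}(F,p)^2=1/(1-\tau)\ge 1$, so the required $m$ stays bounded as $\tau\to0$.
\item Index $2$ is never drawn with probability $(1-\tau)^m\to1$.
\item On that event $SDFu=0$. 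With $x_0=0$, $\eta=0$, $\hat\varepsilon=0$, every $\hat x=tu$ is an exact minimizer.
\item For such $\hat x$, \eqref{equazione-madre} would read $|t|\le\sqrt{1-\tau}\,|t|$, which is false for $t\neq0$.
\end{itemize}

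\textbf{What this means and how to repair it.} No argument can prove \eqref{equazione-madre} as stated. The paper's own proof makes the same jump. Its Gen-RIP (Theorem~\ref{gen-rip}) is additive: it gives $\frac1m\|SDFv\|^2\ge\|\hat IFv\|^2-\frac12\|v\|^2$ on $\mathcal{T}$, and the proof then passes to $\|SDFv\|\ge\sqrt{m/2}\,\|\hat IFv\|$, which does not follow. The repair is to use exactly that additive bound. Matrix Bernstein applied to your $A-\mathbb{E}A$ delivers it with the constant $16$. Combine it with $\|v\|^2=\|\hat IFv\|^2+\|\hat I^\perp Fv\|^2$:
\[
\tfrac12\|v\|^2\le\tfrac1m\|SDFv\|^2+\|\hat I^\perp Fv\|^2
\quad\Longrightarrow\quad
\|v\|\le\sqrt{2/m}\,\|SDFv\|+\sqrt2\,\|\hat I^\perp Fv\|.
\]
Your Step 2 then goes through verbatim. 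It yields \eqref{equazione-madre} with the tail term multiplied by $\sqrt2$. The admissible bound \eqref{bound-admissible-simp} is unaffected, because the tail vanishes for admissible $p$.
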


We observe that the best choice for the sampling probability distribution is the one outlined in Lemma \ref{best-admissible}, so if $\alpha$ is the local coherence of $F$ with respect to $\mathcal{T}$, then the bound becomes
\begin{equation}\notag
    m \geq 16\| \alpha\|^2  \left[ 2kd \log\left(\frac{2e k_d}{k}\right) + \log\left(\frac{4k}{\epsilon}\right) \right].
\end{equation}
When $p$ is admissible, the resulting bounds coincide with those known in the finite dimensional setting \cite{Berk22,berk2023model}. In contrast, the case of non admissible $p$ yields genuinely new bounds, representing a novel contribution in both the finite dimensional and infinite dimensional regimes.

\begin{remark}
    Due to the weighting operator $D$ in our measurement model $b \coloneqq SDFx_0 + \eta$, the noise contribution in the final bound scales as $\frac{1}{\sqrt{m}}\|\eta\|$. This provides a favorable asymptotic behavior, as the error vanishes for finite-energy noise, or remains strictly bounded for noise models scaling with $\sqrt{m}$.
\end{remark}

\subsubsection{The Balancing Property}
In many infinite dimensional scenarios, an admissible distribution (which requires $p_j > 0$ whenever $\alpha_j > 0$) might be impractical if the coherence has infinite support, or if we are constrained to sample only a finite subset of indices. In such cases, the term $\|\hat{I}^\perp F(\hat{x}-\Pi_{R(G)}\left(x_0\right))\|$ in Theorem \ref{thm:main-recovery-merged} does not vanish. To control this error, we introduce the \textit{balancing property}, a concept borrowed from infinite dimensional compressed sensing \cite{adcock2016generalized}.

\begin{definition}
    Given $F$ unitary in $\lnn$ , $p$ a probability distribution on $\mathbb{N}$, with $\hat{I}$ its associated operators. $G$ a $(k, d)$-generalized generative model. We say that the balancing property is verified for $\theta \in (0,1)$, if
    \begin{equation*}
        \| \hat{I}^\perp F \Pi_{R(G)}\| \leq \theta.
    \end{equation*}
\end{definition}

This property essentially requires that the energy of the signals in the model range is concentrated on the support of the sampling distribution $p$. If this holds, we can still derive a strong recovery bound, albeit with an amplification factor of $(1-\theta)^{-1}$.

\begin{remark} 
    One might question whether the balancing property is a realistic assumption. We note that for any desired tolerance $\theta > 0$, any unitary operator $F$, and any finite-dimensional generative model $G$, there exists a distribution $p$ such that the balancing property holds. To see this, consider a uniform distribution over the first $\ell$ indices, $p_\ell(n) = \frac{1}{\ell}\mathbbm{1}_{[\ell]}$, and let $\hat{I}_\ell$ be the corresponding operator projecting onto these first $\ell$ coordinates. Its orthogonal complement, $\hat{I}_\ell^\perp$, projects onto the remaining tail indices.

    Let $\mathcal{V}$ denote the finite-dimensional subspace containing the range $R(G)$, and let $R = F(\mathcal{V})$. We have:
    \begin{equation*}
        \| \hat{I}_\ell^\perp F \Pi_{\mathcal{V}} \| \leq \sup_{x\in R, \|x\|=1} \| \hat{I}_\ell^\perp x \|.
    \end{equation*}
    To bound this tail energy, let $(u_1, \dots, u_r)$ be an orthonormal basis for $R$. Any unit vector $x \in R$ can be expressed as $x = \sum_{i=1}^r c_i u_i$ with $\sum_{i=1}^r |c_i|^2 = 1$. By the Cauchy-Schwarz inequality, the $n$-th component satisfies $|x_n|^2 \leq \sum_{i=1}^r |(u_i)_n|^2$. Summing over the tail yields:
    \begin{equation*}
        \sup_{x\in R, \|x\|=1} \| \hat{I}_\ell^\perp x \|^2 = \sup_{x\in R, \|x\|=1} \sum_{n>\ell} |x_n|^2 \leq \sum_{n>\ell} \sum_{i=1}^r |(u_i)_n|^2.
    \end{equation*}
    Since each basis vector $u_i$ belongs to $\ell^2(\mathbb{N})$, the finite sum of their tail energies must strictly vanish as $\ell \to \infty$. Thus, $\| \hat{I}_\ell^\perp F \Pi_{\mathcal{V}}\| \to 0$. Therefore, by choosing a sufficiently large support size $\ell$, we can satisfy the balancing condition for any arbitrary $\theta$.
\end{remark}

\begin{corollary}\label{cor:balancing-bound}
    Given $F$ unitary in $\lnn$, $p$ a probability distribution on $\mathbb{N}$, $m \in \N$ with $S$ and $D$ its associated operators. Let  $G$ be a $(k, d)$-generalized generative network and $\mathcal{T} = R(G) - R(G)$ such that $\mu_{\mathcal{T}}(F,p)\geq 1$. Then if the balancing property is verified for $\theta \in (0,1)$ and
    \begin{equation*}
        m \geq 16 \mu_\mathcal{T}(F,p)^2  \left[ 2kd \log\left(\frac{2e k_d}{k}\right) + \log\left(\frac{4k}{\epsilon}\right) \right], 
    \end{equation*}
    the following holds with probability at least $1-\varepsilon$:\\
    for any $x_0 \in \lnn $, let $b \coloneqq SDFx_0 + \eta$ where $\eta \in \mathbb{C}^{m}$ and ,  $x^\perp \coloneqq  x_0 - \Pi_{R(G)}x_0$ . Let $\hat{x} \in R(G)$ satisfy $\lVert SDF\hat{x} -b \rVert \leq \min_{x\in R(G)} \lVert SDFx - b \rVert + \hat{\varepsilon}.$ Then
    \begin{equation*}
        \lVert \hat{x} - x_0 \rVert\leq \frac{1}{1-\theta}\left(\lVert x^\perp \rVert+\sqrt{\frac{2}{m}} \Big[2 \lVert SDF x^\perp \rVert + 2\lVert\eta \rVert +\hat{\varepsilon} \Big]\right)
    \end{equation*}
\end{corollary}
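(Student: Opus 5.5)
Our starting point is Theorem \ref{thm:main-recovery-merged}. Its hypotheses on $m$, on $\mu_{\mathcal{T}}(F,p)\geq 1$ and on the estimator $\hat{x}$ coincide with those of the corollary. Hence, on the same event of probability at least $1-\varepsilon$, the bound \eqref{equazione-madre} holds simultaneously for every $x_0$, $\eta$ and $\hat{x}$. Write
\[
E \coloneqq \lVert x^\perp \rVert + \sqrt{\tfrac{2}{m}}\Big[2\lVert SDFx^\perp\rVert + 2\lVert \eta\rVert + \hat{\varepsilon}\Big].
\]
With this notation, \eqref{equazione-madre} reads $\lVert \hat{x}-x_0\rVert \leq E + \lVert \hat{I}^\perp F(\hat{x}-\Pi_{R(G)}(x_0))\rVert$. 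The whole task is to absorb the tail term into the left-hand side using the balancing property.

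The key step is to bound the tail term by $\theta\lVert \hat{x}-\Pi_{R(G)}(x_0)\rVert$. Both $\hat{x}$ and $\Pi_{R(G)}(x_0)$ lie in $R(G)$. I will interpret $\Pi_{R(G)}$ in the balancing property as the orthogonal projection onto the finite-dimensional linear subspace $\mathcal{V}$ containing $R(G)$; this is the reading used in the Remark preceding the corollary. In particular, the difference $v \coloneqq \hat{x}-\Pi_{R(G)}(x_0)$ lies in $\mathcal{V}$, so $v = \Pi_{\mathcal{V}} v$. Therefore
\[
\lVert \hat{I}^\perp F v\rVert = \lVert \hat{I}^\perp F\Pi_{\mathcal{V}} v\rVert \leq \theta \lVert v\rVert .
\]
This identification is the main obstacle. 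If $\Pi_{R(G)}$ were literally a nonlinear projection onto the cone union $R(G)$, I would instead apply the hypothesis to the linear span, which contains $\mathcal{T}$. This is also the subspace on which the Remark verifies the property, so the argument goes through.

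Next, I bound $\lVert v\rVert$ by the triangle inequality: $\lVert v\rVert \leq \lVert \hat{x}-x_0\rVert + \lVert x_0-\Pi_{R(G)}(x_0)\rVert = \lVert \hat{x}-x_0\rVert + \lVert x^\perp\rVert$. Plugging this into the bound gives
\[
\lVert \hat{x}-x_0\rVert \leq E + \theta\lVert \hat{x}-x_0\rVert + \theta\lVert x^\perp\rVert .
\]
Since $\theta<1$, rearranging yields $(1-\theta)\lVert \hat{x}-x_0\rVert \leq E + \theta\lVert x^\perp\rVert$.

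This is slightly weaker than the statement because of the extra $\theta\lVert x^\perp\rVert$. To remove it, I will use orthogonality. With $\Pi$ the orthogonal projection onto $\mathcal{V}$, the vector $x^\perp$ is orthogonal to $v\in\mathcal{V}$. By Pythagoras, $\lVert \hat{x}-x_0\rVert^2 = \lVert v\rVert^2 + \lVert x^\perp\rVert^2$, so $\lVert v\rVert \leq \lVert \hat{x}-x_0\rVert$ directly. This gives $\lVert \hat{x}-x_0\rVert \leq E + \theta\lVert \hat{x}-x_0\rVert$, hence $\lVert \hat{x}-x_0\rVert \leq (1-\theta)^{-1}E$, which is exactly the claim.

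If only a metric projection onto the cone union is available, so that orthogonality fails, I would fall back on the triangle-inequality version above. I would then note that $E + \theta\lVert x^\perp\rVert \leq (1+\theta)E$, and check whether the constant can be tightened. The orthogonal-projection route is the intended one.
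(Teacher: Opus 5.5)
Your main route is the paper's one-line proof. The paper writes $\hat{x}-\Pi_{R(G)}(x_0)=\Pi_{R(G)}(\hat{x}-x_0)$ and applies $\|\hat{I}^\perp F\Pi_{R(G)}\|\leq\theta$ to $\hat{x}-x_0$, which amounts to your Pythagoras step $\|v\|\leq\|\hat{x}-x_0\|$. The obstacle you flag is, however, a genuine gap, and the paper's argument shares it. The proof of Theorem~\ref{thm:main-recovery-merged} uses $\Pi_{R(G)}(x_0)$ as a competitor in $\min_{x\in R(G)}\|SDFx-b\|$. So \eqref{equazione-madre} is only available when $\Pi_{R(G)}(x_0)$ is a point of $R(G)$. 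With $\Pi_{\mathcal{V}}$ in its place the bound is false even for admissible $p$. Take $F=I$, $p$ the point mass at $1$ (so $\mu_{\mathcal{T}}(F,p)=1$), $G(z)=\sigma(z_1)e_1$ (so $R(G)=\{te_1: t\geq 0\}$), $x_0=-e_1$ and $\eta=0$. The exact minimizer is $\hat{x}=0$, and $\Pi_{\mathcal{V}}x_0=x_0$ gives $x^\perp=0$. The balancing property holds for every $\theta$, yet the right-hand side vanishes while $\|\hat{x}-x_0\|=1$. So you cannot invoke \eqref{equazione-madre} and $x^\perp\perp\mathcal{V}$ at the same time. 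Under the consistent reading, \eqref{equazione-madre} used as a black box only gives your weaker fallback, because for a nonconvex union of cones $\|\hat{x}-\Pi_{R(G)}(x_0)\|$ may exceed $\|\hat{x}-x_0\|$.

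Under the consistent reading (a point $\Pi_{R(G)}(x_0)\in R(G)$, balancing on $\mathcal{V}$), what works is to absorb the tail into $v$ rather than into $\hat{x}-x_0$, one step before \eqref{equazione-madre}. The estimator property gives $\|SDFv\|\leq K\coloneqq 2\|SDFx^\perp\|+2\|\eta\|+\hat{\varepsilon}$. Gen-RIP with $\delta=\tfrac12$ (applicable since $v\in\mathcal{T}$) and balancing (since $v\in\mathcal{V}$) give $\tfrac1m\|SDFv\|^2\geq\|\hat{I}Fv\|^2-\tfrac12\|v\|^2\geq(\tfrac12-\theta^2)\|v\|^2$. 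Hence $\|\hat{x}-x_0\|\leq\|x^\perp\|+K/\sqrt{m(\tfrac12-\theta^2)}$ for $\theta<1/\sqrt{2}$, which implies the stated bound whenever $\theta\leq\tfrac23$.

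Beyond that, the paper's step $\|SDFv\|\geq\sqrt{m/2}\,\|\hat{I}Fv\|$ does not follow from Gen-RIP when $\hat{I}^\perp Fv\neq 0$. For $\theta$ close to $1$ the corollary (and \eqref{equazione-madre} for non-admissible $p$) is actually false, even when $R(G)$ is a subspace so that the projection issue disappears. Take $F=I$, $R(G)=\mathrm{span}(e_1,u)$ with $u=\rho e_2+\sqrt{1-\rho^2}\,e_3$ (realizable using $\sigma(t)-\sigma(-t)=t$), and $p_1=1-\rho^2$, $p_2=\rho^2$. Then $\mu_{\mathcal{T}}(F,p)=(1-\rho^2)^{-1/2}$, the balancing property holds with $\theta=\sqrt{1-\rho^2}$, and $m$ can be fixed independently of $\rho\leq\tfrac12$. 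With probability $(1-\rho^2)^m\to 1$ as $\rho\to 0$, the index $2$ is never drawn, so $SDFu=0$. For $x_0=0$, $\eta=0$, the point $\hat{x}=u$ is then an exact minimizer with $\|\hat{x}-x_0\|=1$, while the right-hand side is $0$. The missing ingredient is therefore a restriction on $\theta$ (such as $\theta\leq\tfrac23$), not a sharper projection argument.
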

In other words, we retain the same form of stability guarantees as in the admissible case, at the cost of a worse constant, while using a sampling strategy that allows us to handle non admissible and thus more realistic probability distributions such as the one presented in the remark above.

\section{Numerical Validation}\label{sec:numerics}
The theoretical guarantees derived in the previous chapters are formulated within the infinite-dimensio\-nal Hilbert space $\ell^2(\N)$. While this continuum formulation provides rigorous bounds, direct numerical verification in the exact continuum limit is computationally infeasible. Consequently, any numerical validation relies on finite dimensional discretizations. To ensure that our observations are not merely artifacts of specific grid choices, but rather accurate reflections of the underlying function-space theory, it is crucial to employ an architecture that operates consistently across varying resolutions.
To this end, we leverage the \textit{Functional Autoencoder} (FAE) framework introduced by \cite{bunker2025autoencoders}. Unlike standard discrete autoencoders, FAEs are designed to be natively mesh free, learning operators that map between function spaces rather than fixed Euclidean vectors. By adopting this architecture, we can empirically demonstrate the central claim of this work: that the proposed recovery guarantees and adaptive sampling strategies are robust to the discretization dimension.
In the following sections, we apply this framework to the Darcy flow equation, a classic application for scientific machine learning. We perform experiments across varying spatial resolutions, ranging from $32 \times 32$ to $128 \times 128$, to verify that the reconstruction error and local coherence estimates exhibit asymptotic behavior consistent with the theoretical infinite dimensional limit, independent of the specific mesh granularity used during training.
\subsection{Experimental Setup and Methodology}
The following sections detail the numerical validation of our theoretical framework, specifically addressing the inverse problem associated with the steady state Darcy flow equation \cite{darcy1} \cite{darcy2}. This governing equation, defined as:
\begin{equation}
    -\nabla \cdot (a(x) \nabla u(x)) = f(x), \quad x \in \Omega,
\end{equation}
describes fluid flow through a porous medium on a bounded domain $\Omega \in \R^2$, where $a(x)$ represents the permeability coefficient of the medium, $u(x)$ denotes the resulting pressure field (the solution), and $f(x)$ is the source term.\smallskip 

Our objective is to recover the pressure field $u \in L^2(\Omega)$ from a finite set of noisy Fourier measurements. Formally, we consider the inverse problem
$$b = S D \mathcal{F} u + \eta,$$
where $\mathcal{F}$ denotes the discrete Fourier transform at a high-fidelity resolution (fixed at $128 \times 128$ in our experiments), $S$ is a sampling operator that selects a subset of $m$ frequencies based on a specific probability distribution (uniform or coherence-based), $D$ is the associated weighting operator to ensure unbiasedness, and $\eta$ represents measurement noise. Since the noise dependence was not the main focus of this work, all the experiments are performed with noiseless measurements. We approach this ill-posed problem by restricting the solution to the range of a FAE, $G: \mathbb{R}^k \to L^2(\Omega)$, trained to approximate the solution manifold of the Darcy flow. The reconstruction is performed by searching for the optimal latent code $\hat{z}$ that minimizes the data fidelity loss:
$$\hat{z} = \underset{z \in \mathbb{R}^k}{\text{argmin}} \| S D \mathcal{F} (U_r G_{r}(z)) - b \|_2^2,$$
where $G_{r}$ is the generator trained at resolution $r$, and $U_r$ is a fixed upscaling operator mapping the generator's output to the measurement resolution. This formulation allows us to decouple the generation resolution from the measurement resolution, testing the resolution-invariance of our recovery guarantees.

\subsubsection{Dataset and Architecture}
We validate our framework using the Darcy flow dataset provided in \cite{darcy-dataset}, whose specific implementation details are given in \cite[Appendix A.3.2]{darcy-dataset}. These solutions exhibit complex, multi scale features that challenge standard linear reconstruction methods. The original dataset contains 2048 images at 421 x 421 resolution, that for out experiments we resize to 32 x 32, 64 x 64 and 128 x 128.

We utilize the FAE introduced in \cite{bunker2025autoencoders}. However, a critical modification is made to the architecture: we replace the Gaussian Error Linear Unit (GeLU) activation functions with Rectified Linear Units (ReLU). This modification is necessary to ensure the network is within the theoretical guarantees developed in this work. Apart from this substitution, the rest of the network is kept identical to the original implementation and satisfies the assumptions required for our theoretical analysis.

\subsubsection{Training}
To demonstrate the resolution independent capabilities of the FAE, the training is carried out at three distinct spatial resolutions: $32\times32$, $64\times64$, and $128\times128$. The FAE training is performed in an entirely unsupervised manner. The network, consisting of an encoder $\mathcal{E}$ and a decoder $\mathcal{D}$, is optimized to compress the input function into a low dimensional latent representation $z \in \mathbb{R}^k$ (in our experiments $k=64$) and subsequently decompress it back to the original function domain. The objective is to minimize the reconstruction error while learning a discretization invariant representation of the physical operator. We adopt the same training protocol described in \cite{bunker2025autoencoders}, and specifically, we train our FAE for 50,000 epochs, with a learning rate equal to 0.001 and a batch size of 32.
\subsubsection{Fitting the Latent Space}
Once the autoencoder training is completed, the decoder can be utilized independently as a generator $G(z)$ once we can sample $z$ from the latent distribution. However the aggregated posterior distribution of the encoded training data often does not match a standard Gaussian as in variational autoencoders \cite{kingma2013auto}. Consequently, simple Gaussian sampling would likely yield latent codes that map to out of distribution functions.
To accurately capture the non-Gaussian structure of the learned latent manifold, we fit a Gaussian Mixture Model (GMM) with $K=10$ components to the latent codes of the training data:
\begin{equation}\label{prior}
    p(z) = \sum_{i=1}^{K} \pi_i \mathcal{N}(z|\mu_i, \Sigma_i).
\end{equation}
This GMM serves two critical roles: it provides a valid generative mechanism for sampling, and it acts as a structured prior during the recovery phase, allowing us to penalize solutions that drift away from the support of the training distribution.
\subsubsection{Reconstruction}
The recovery problem is formulated as finding the latent space element $z$ that best matches the observed sub-sampled Fourier measurements.
A crucial aspect of our implementation is the handling of multi-resolution generators against a fixed high-resolution ground truth that we fix at $128 \times 128$. Let $u_{\text{target}}$ be the ground truth signal at that resolution. For a generator $G_r$ trained at resolution $r$ (where $r \in \{32, 64, 128\}$), we solve the following optimization problem:
\[ 
\hat{z} = \argmin_{z \in \mathbb{R}^k} \| SD \mathcal{F} ( U^{128}_r G_r ( z ) )  - SD\mathcal{F} (u_{\text{target}} ) \|^2 
\]
where $U^{128}_r$ denotes a bicubic upscaling operator that maps the generator output to the target resolution, and $\mathcal F$, $S$, and $D$ are the equivalent of the operators introduced in section \ref{settupp}, with the difference that now we have to operate in finite dimension, so $\mathcal F$ is the FFT in dimension $128\times 128$, $S$ is the sampling operator that samples $m$ Fourier coefficient based on the probability we have chosen (in our experiments either the one induced by the local coherence or uniform), and finally $D$ weights the samples in the same manner.
This formulation ensures that all models are evaluated against the same high-fidelity physical ground truth regardless of their internal resolution. We also ensure that the sampling operator is the same across different output dimensions since everything is up-scaled before measurement via the Fourier transform and sampling are performed. The optimization is performed using the Adam optimizer with a learning rate of $0.01$ for 500 iterations. We employ an implementation that optimizes latent codes for multiple test instances simultaneously. We evaluate performance using the Mean Squared Error (MSE) on the spatial domain as the metric.

\subsection{Results}
We now present the results of our experiments. We used half of our datasets, so 1024 images, at the 3 different, for the training of the general model as well to fit the GMM model to the latent space. The other half of the images are used to compute their reconstruction from the undersampled Fourier transform. All the numerical results that follows are means across all the images of this test set.

\subsubsection{Local coherence and adaptive sampling}

To compute the optimal sampling distribution~\eqref{pstar}, one would need to evaluate the local coherence (Definition~\ref{local-coherence}). However, this quantity is not explicitly computable, so we approximate it via a Monte Carlo procedure.

Specifically, we generate $N = 1024$ samples from the trained GMM, pass them through the generator, and upscale the outputs to the target resolution. We then compute the maximum magnitude of the corresponding Fourier coefficients across these samples, obtaining
\[
\alpha^r_j = \max_{i = 1,\dots,N} 
\left| \bigl(\mathcal{F} U^{128}_r (G_r(z_i))\bigr)_j \right|,
\]
where $r \in \{32, 64, 128\}$ and $j$ indexes the Fourier frequencies. The sampling probabilities $p_j$ are then obtained by normalizing $\alpha^r_j$. The dependence on $r$ arises because generators trained at different resolutions exhibit different local coherence structures. We compare this adaptive sampling strategy against a uniform sampling baseline.

From a theoretical perspective, local coherence should depend on normalized differences of signals in the model range rather than on individual samples. Motivated by this observation, we also tested an alternative Monte Carlo estimator based on self-differences. Concretely, given a batch of generated images $\{u_i\}_{i=1}^B$, we computed all pairwise differences $u_i - u_j$, normalized each difference, applied a channel-wise two-dimensional Fourier transform, and then maximized the modulus across the batch of normalized differences. The resulting coherence proxy was squared and normalized to obtain sampling probabilities.

While this estimator is more closely aligned with the theoretical definition of local coherence, it consistently underperformed the simpler maximum magnitude estimator described above. Empirically, the original strategy based on maximizing the Fourier magnitude of individual generated samples led to a sampling distributions with improved reconstruction performance.

Figure~\ref{fig:coherence-comparison} compares the two approaches with the naive approach of uniform sampling. Despite its stronger theoretical motivation, the self-difference estimator does not translate into improved empirical performance in our setting.

\begin{figure}[t]
    \centering
    \includegraphics[width=0.9\linewidth]{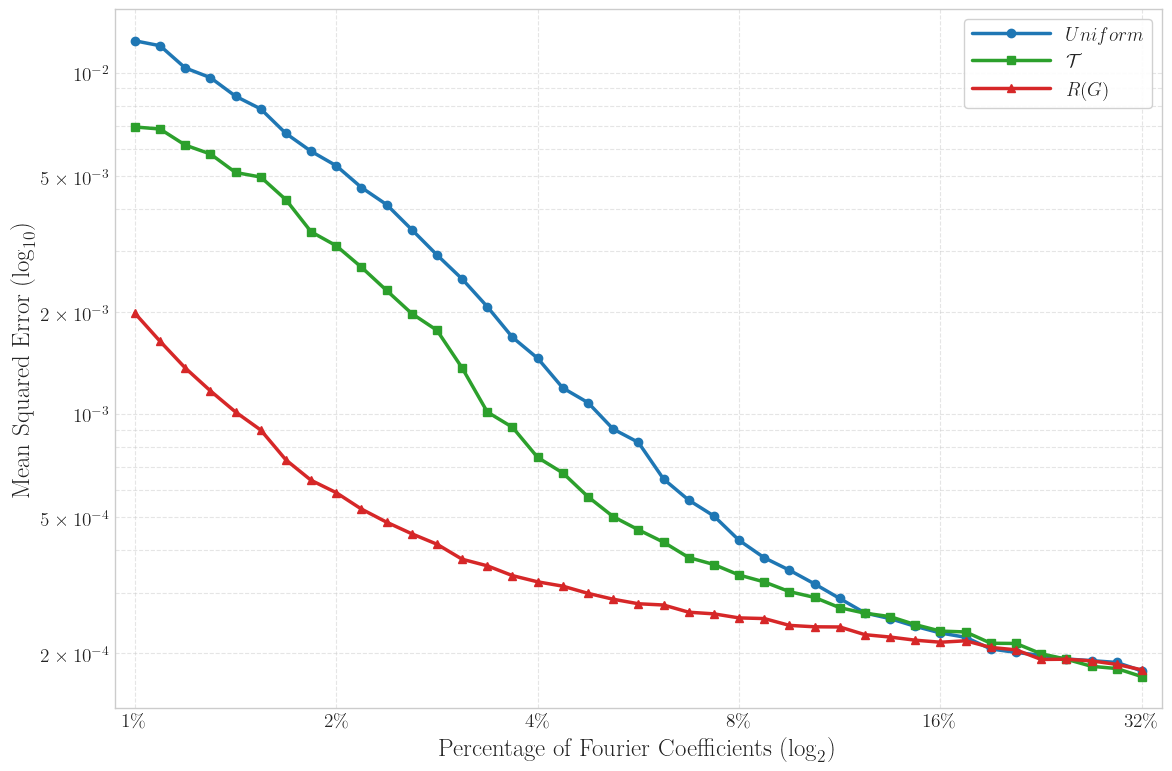}
    \caption{Comparison between the original maximum-magnitude based coherence estimator and the self-difference based estimator. Although the latter is theoretically better aligned with the definition of local coherence, the former yields superior empirical performance (model trained and with output in 64 x 64 resolution).}
    \label{fig:coherence-comparison}
\end{figure}

\subsubsection{Adaptive sampling}
To evaluate the reconstruction performance, we vary the number of measurements $m$ on a logarithmic scale, ranging from $1\%$ to $32\%$ of the total number of Fourier coefficients. This allows us to analyze the asymptotic behavior of the recovery error as the sampling density increases. 
As shown in Figure~\ref{line-au}, adaptive sampling significantly outperforms the uniform baseline in the undersampled regime, exhibiting superior efficiency for a fixed error threshold. As the sampling rate increases, the two strategies asymptotically converge to comparable error levels. Moreover, this behavior is consistent across models trained on data from all three discretizations considered. In Figure~\ref{img-au}, we provide a qualitative comparison of the reconstruction results at different sampling rates.

\begin{figure}[!htb]
    \centering
    \includegraphics[width=0.9\linewidth]{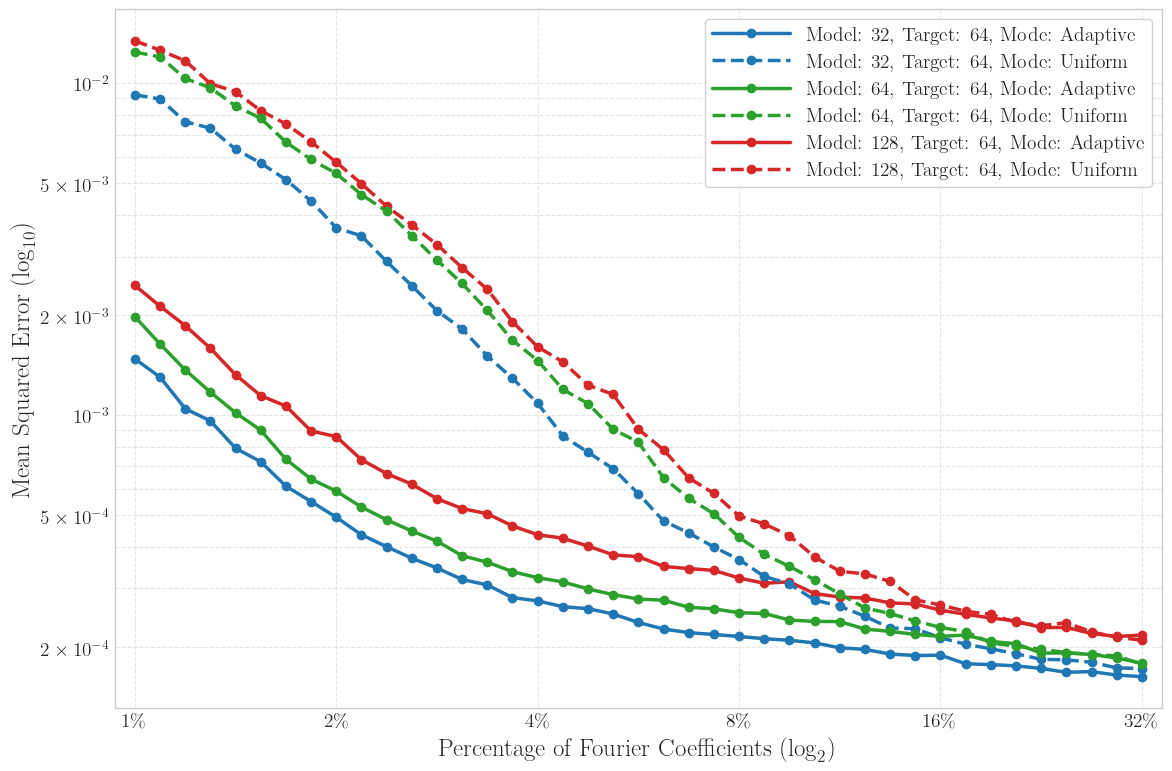}
\caption{Reconstruction error versus sampling rate for uniform and adaptive Fourier sampling.\label{line-au}}
\end{figure}
\begin{figure}[!htb]
    \centering
    \includegraphics[width=0.7\linewidth]{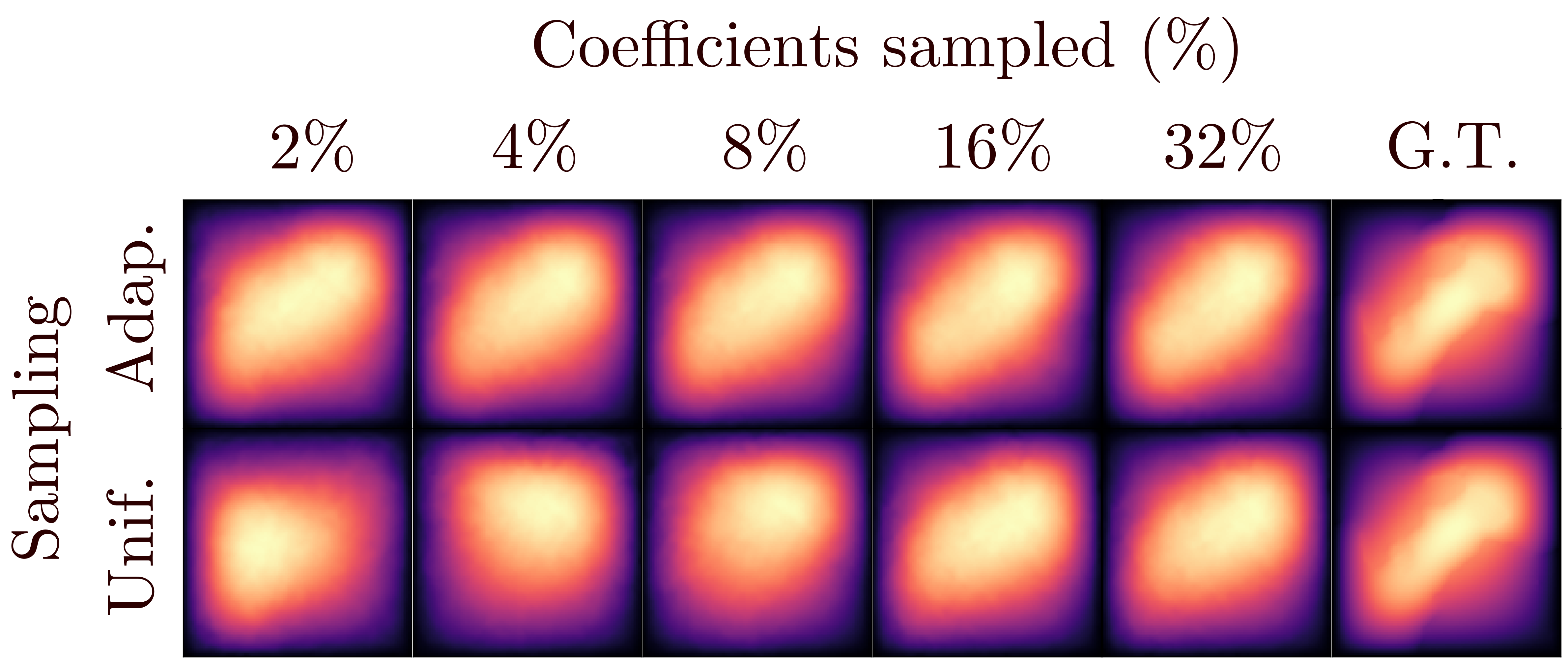}
\caption{Reconstruction examples comparing uniform and adaptive sampling at different rates (model trained on 64 x 64 resolution).\label{img-au}}
\end{figure}

\subsubsection{Discretization invariance of the reconstruction error}
The main objective of this experiment is to show that the reconstruction performance is mostly invariant with respect to the output dimension. In Figure~\ref{linee-dim}, we report the recovery error for three different models, each trained at one of the three discretization levels considered, and evaluated while generating outputs at three different resolutions (namely $32 \times 32$, $64 \times 64$, and $128 \times 128$). 

We observe that, in the asymptotic regime where a sufficient number of measurements is available, the recovery error is primarily determined by the training resolution of the model, and is essentially independent of the target output dimension. In contrast, in the severely undersampled regime, the model resolution has a mild impact on performance, with lower output dimensions leading to slightly better reconstructions. 

Figure~\ref{img-dim} qualitatively illustrates the reconstruction performance across varying output dimensions. We observe that in the sparse sampling regime, the recovery is dominated by low frequency features, which naturally favors models generating at lower resolutions. Furthermore, in the regime of abundant samples, models trained on lower resolutions exhibit superior performance. This behavior can be understood through the lens of the spectral bias of neural networks \cite{rahaman2019spectral}, which shows that deep networks prioritize learning low-frequency functions. In the severely undersampled regime, the available measurements primarily constrain the low-frequency components of the signal. A high-resolution generator, having a larger capacity, is prone to 'hallucinating' high-frequency artifacts that lie in the null space of the measurement operator. In contrast, a lower-resolution generator acts as an explicit low-pass filter, effectively regularizing the inverse problem by suppressing these spurious high-frequency components and ensuring the recovery aligns with the spectral bias inherent to neural networks.
\begin{figure}[!htb]
    \centering
    \includegraphics[width=0.9\linewidth]{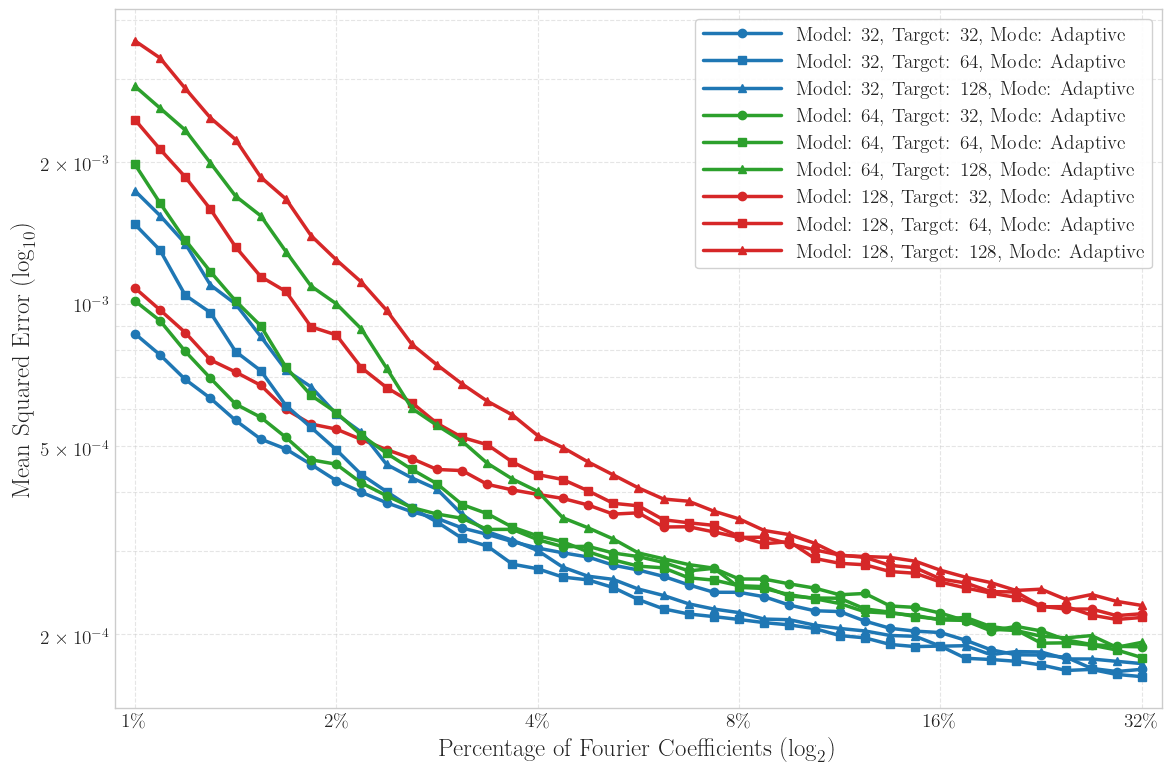}
\caption{Recovery error across different training and output discretizations.\label{linee-dim}}
\end{figure}
\begin{figure}[!htb]
    \centering
    \includegraphics[width=0.7\linewidth]{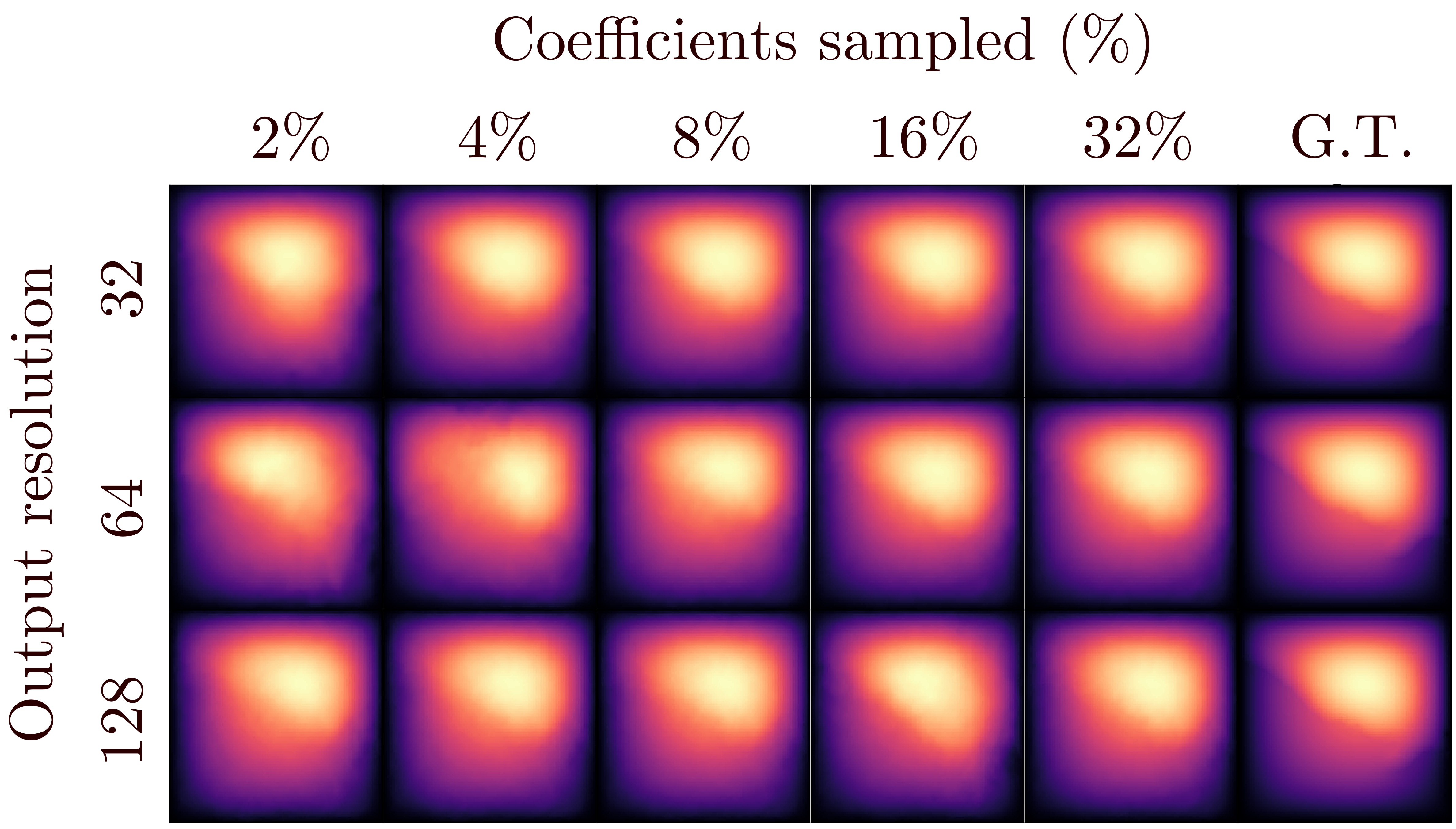}
\caption{Example reconstructions for varying output resolutions (model trained on 64 x 64 resolution)\label{img-dim}}
\end{figure}

\subsubsection{GMM as regularizer}
In this experiment, we investigate the effect of incorporating the latent prior~\eqref{prior} as an explicit regularization term in the reconstruction optimization. Instead of solving a purely data fidelity problem, we augment the objective with a penalty that encourages latent variables to remain in regions of high probability under the fitted GMM. This promotes solutions that are more consistent with the latent distribution learned during training. Unlike the previous experiments, this study is not concerned with resolution invariance.

The resulting optimization problem can be written as
\[
\hat{z}
=
\argmin_{z \in \mathbb{R}^k}
\Bigl\|
SD \mathcal{F}\bigl(U^{128}_r G_r(z)\bigr)
-
SD \mathcal{F}(u_{\text{target}})
\Bigr\|^2
+
\lambda \, \bigl(-\log p(z)\bigr),
\]
where $p(z)$ is the GMM prior defined in~\eqref{prior} and $\lambda \geq 0$ controls the strength of the regularization.

Rather than fixing $\lambda$ a priori, we perform a parameter-tuning procedure to identify the most effective regularization strength by using $10\%$ of our dataset as validation. Specifically, for a discrete set of candidate values of $\lambda$, we compute the mean reconstruction error averaged across all sampling levels $m$. The value of $\lambda$ that minimizes this aggregated error is then selected as the optimal regularization parameter, in our case we found $\lambda = 10^{-4}$. The performance obtained with this tuned regularizer is subsequently compared against the unregularized case ($\lambda = 0$) for the remaining data points.

Figure~\ref{regul} reports the reconstruction performance using the optimally tuned regularization parameter as a function of the number of measurements. In the severely undersampled regime, the inclusion of the GMM prior leads to a clear improvement over the unregularized approach, as the prior effectively constrains the optimization to regions of the latent space corresponding to plausible generator outputs. In this context, plausible or realistic refers to samples that are likely under the generator induced data distribution, rather than to any external notion of physical realism.

As the number of measurements increases, the data fidelity term becomes increasingly informative, and the advantage of regularization progressively diminishes. In the high sampling regime, the unregularized formulation performs better than the regularized one, indicating that the measurements alone are sufficient to accurately identify the latent code without introducing bias from the prior. Overall, these results highlight the complementary roles of the measurement operator and the latent prior, with the latter being most beneficial when observational information is limited.

\begin{figure}[!htb]
    \centering
    \includegraphics[width=0.9\linewidth]{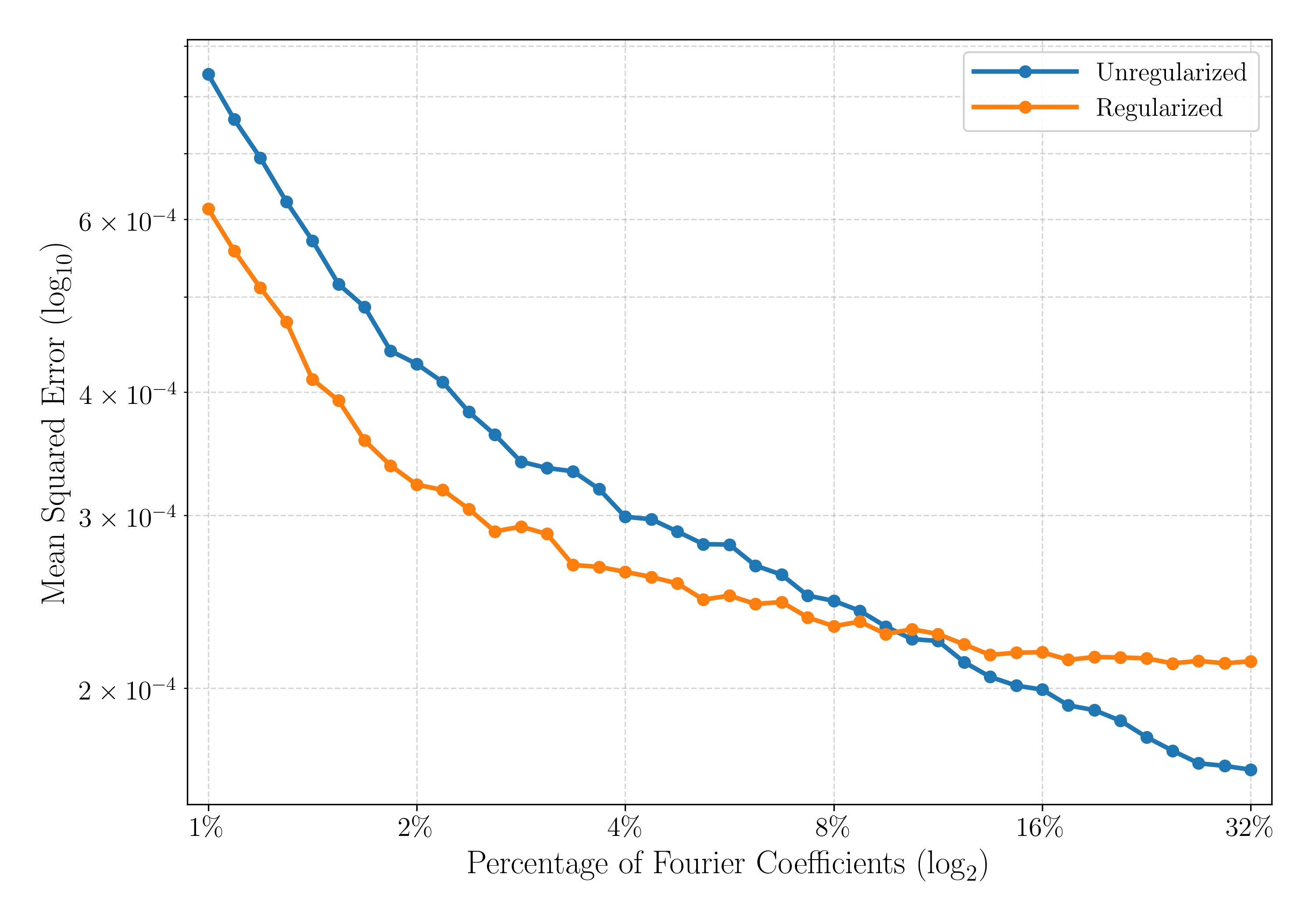}
\caption{Comparison between our proposed method and classical compressed sensing (CS).\label{regul}}
\end{figure}

\section{Proofs}\label{sec:proofs}
We now provide all the results and proofs needed for the results of chapter 2
\subsection{Setup and local coherence}
Let us start by characterizing the range of a $(k,d)$-Genera\-lized Generative Network, in particular obtaining that its range is contained in a finite dimensional space.
\begin{lemma}\label{lemma-sottospazi}
    Let $2\leq k \coloneqq k_0 \leq ... \leq k_d$ and $G$ be a $(k,d)$-Generalized Generative Network. Then $R(G)$ is a union of no more than $N$ at-most $k$ dimensional polyhedral cones where
    \begin{equation*}
        N \leq \left( \frac{2ek_d}{k} \right)^{kd}.
    \end{equation*}
\end{lemma}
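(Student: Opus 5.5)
The plan is to reduce everything to the finite-dimensional ReLU network $\tilde G(z) \coloneqq \sigma(W^{(d)}\sigma(\cdots\sigma(W^{(1)}z)))$, which maps $\mathbb{C}^k$ into $\mathbb{C}^{k_d}$, and then push through the linear map $W$. Since $G = W\circ \tilde G$ and $W$ is linear, the image of a polyhedral cone under $W$ is again a polyhedral cone in $\lnn$: it is generated by the images of finitely many generators, and its dimension can only drop. So it suffices to show that $R(\tilde G)$ is a union of at most $N$ polyhedral cones of dimension at most $k$. In particular $R(G)$ lies in $W(\mathbb{C}^{k_d})$, a subspace of dimension at most $k_d$, which is the finite-dimensionality needed later.

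For $\tilde G$ I will argue layer by layer, following the standard argument in \cite{Berk22,berk2023model}. I identify $\mathbb{C}^{k_i}$ with the real structure on which the ReLU acts coordinatewise, consistent with how the paper uses these results. The first step concerns the domain $\mathbb{C}^k$. Fix the sign pattern of $W^{(1)}z$, that is, which of the $k_1$ coordinates are nonnegative. This partitions the domain into regions cut out by $k_1$ hyperplanes through the origin, and each region is a polyhedral cone. On each region, $\sigma(W^{(1)}\cdot)$ coincides with a linear map $\mathrm{diag}(\text{pattern})W^{(1)}$. The image of a polyhedral cone of dimension at most $k$ under a linear map is a polyhedral cone of dimension at most $k$.

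Next I iterate. At layer $i$, each current piece is a cone $C$ of dimension at most $k$. The $k_i$ hyperplanes coming from the rows of $W^{(i)}$, restricted to $\mathrm{span}(C)$, cut $C$ into subcones. On each subcone the layer acts linearly, so its image is again a polyhedral cone of dimension at most $k$. Composing, every piece of $R(\tilde G)$ is a polyhedral cone of dimension at most $k$.

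The main point is counting the pieces. An arrangement of $n$ hyperplanes through the origin in a space of dimension at most $k$ has at most $\sum_{j=0}^{k}\binom{n}{j}$ regions. Using $n = k_i \ge k$, this is bounded by $(e k_i/k)^k$. I will also handle the cases where the arrangement is degenerate or the regions are lower-dimensional; these only reduce the count, and the lower-dimensional faces are contained in the closures of full regions, so they are absorbed. Multiplying over the $d$ layers gives
\begin{equation*}
N \leq \prod_{i=1}^d \left(\frac{e k_i}{k}\right)^{k} \leq \left(\frac{e k_d}{k}\right)^{kd},
\end{equation*}
using $k_i \le k_d$. The weaker stated bound $(2ek_d/k)^{kd}$ follows immediately. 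The factor $2$ also leaves slack for a cruder region count, or for the complex case where each complex coordinate is split into real and imaginary parts.

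The step I expect to be most delicate is making this region count rigorous. Specifically, I need the pieces at each layer to be genuine polyhedral cones of dimension at most $k$, including the closed-versus-open boundary issues, and I need the per-layer count to use the ambient dimension $k$ rather than $k_{i-1}$. My first attempt at this is to restrict each hyperplane to $\mathrm{span}(C)$ and apply the classical region-counting bound there.
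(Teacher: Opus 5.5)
Your proposal is correct and follows the paper's proof: the paper also reduces to the penultimate map $\tilde G$, cites the counting argument of \cite[Lemma S2.2]{Berk22} for it (bounding the widths by $k_d$, as you do), and then observes that the linear map $W$ sends each polyhedral cone to a polyhedral cone without increasing their number; you reprove that count yourself and even obtain the sharper $\prod_i (ek_i/k)^k \le (ek_d/k)^{kd}$. The one thing to drop is your aside that the factor $2$ absorbs the complex case. Splitting real and imaginary parts gives $2k_i$ real hyperplanes in $\mathbb{R}^{2k}$, so your count would only yield $(ek_d/k)^{2kd} > (2ek_d/k)^{kd}$ pieces (since $ek_d/k \ge e > 2$), of real dimension up to $2k$. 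Your argument, like the paper's (which places $R(\tilde G)$ in $\mathbb{R}^{k_d}$), is therefore a real-weights argument.
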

     \begin{proof}
        The proof follows the counting argument of \cite[Lemma S2.2]{Berk22}.  We observe that the geometric mean of the layer widths is bounded by the maximum width $k_d$. Therefore, the range of the penultimate layer, $\tilde{G}(z) = \sigma(W^{(d)}\dots\sigma(W^{(1)}z))$, resides in $\mathbb{R}^{k_d}$ and is a union of at most $N\leq\left( \frac{2ek_d}{k} \right)^{kd}$ polyhedral cones. The final layer of our generator is a linear operator $W: \mathbb{C}^{k_d} \to l^2(\mathbb{N})$. Since $W$ is a linear map from a finite-dimensional space, it is necessarily bounded. A bounded linear operator maps a polyhedral cone in the domain to a polyhedral cone in the codomain (specifically, the image is the conic hull of the images of the generators). Thus, the geometric structure of the union of cones is preserved in $l^2(\mathbb{N})$ without increasing the number of components $N$.
    \end{proof}
\begin{remark}\label{remark-T}
    Since $R(G)$ is a union of at most $N$ $k$-dimensional cones we observe that the Minkowski sum $\mathcal{T} = R(G) - R(G) $ is a union of at most $N^2$ $2k$-dimensional cones.
\end{remark}
    
    Let us now prove that local coherences are well defined.
    \begin{proof}[Proof of Lemma \ref{alpha-in-l2}]
        Let $(\varphi_i)_{i\in[r]}$ be a basis of $\Delta(\mathcal{U})$ and $(F_j := F^* e_j)_{j\in\mathbb{N}}$ the row of $F$ which form an orthonormal basis of $\lnn$ since $F$ is unitary. Then we can consider
        \begin{equation*}
            \alpha_j = \sup_{c\in\mathbb{C}^r \colon \|c\|=1} |\langle F_j , \sum_{i\in[r]}c_i\varphi_i\rangle| = |\langle F_j , \sum_{i\in[r]}\bar{c}_{i,j}\varphi_i\rangle|,
        \end{equation*}
        where $\bar{c}_{i,j}$ are the coefficients attaining the supremum (which is achieved as a maximum by compactness). Then we can conclude with
        \begin{align*}
            \sum_{j\in\mathbb{N}} \alpha_j^2 & = \sum_{j\in\mathbb{N}} \langle F_j , \sum_{i\in[r]}\bar{c}_{i,j}\varphi_i\rangle^2 \\
            &= \sum_{j\in\mathbb{N}} \left(\sum_{i\in[r]}\bar{c}_{i,j}\langle F_j , \varphi_i\rangle\right)^2 \\
            & \leq r\sum_{j\in\mathbb{N}} \sum_{i\in[r]}\bar{c}_{i,j}^2\langle F_j , \varphi_i\rangle^2 \\
            & \leq r\sum_{j\in\mathbb{N}} \sum_{i\in[r]}\langle F_j , \varphi_i\rangle^2 \\
            &= r\sum_{i\in[r]}\sum_{j\in\mathbb{N}}\langle F_j , \varphi_i\rangle^2 \\
            &= r\sum_{i\in[r]} \| \varphi_i \|^2 = r^2 <+\infty. \qedhere
        \end{align*}
    \end{proof}
The following two lemmas establish conditions under which $\mu_\mathcal{U}(F,p) \geq 1$, which are essential for our main results. In particular, the second lemma guarantees that this condition holds whenever $p$ is admissible.

\begin{lemma}\label{non-admiss>1}
    Let $F$ be a unitary operator in $\lnn$, $\mathcal{U} \subseteq \lnn$ be a finite union of convex cones contained in a finite-dimensional subspace, and $p$ be a probability distribution on $\mathbb{N}$. If there exists a unit vector $\hat{u} \in \Delta(\mathcal{U})$ such that $\| \hat{I} F\hat{u} \| = 1$, then $\mu_\mathcal{U}(F,p) \geq 1$.
\end{lemma}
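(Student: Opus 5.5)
The plan is to expand the norm $\|\hat{I}F\hat{u}\|^2$ coordinatewise and compare each coordinate with the local coherence, using the defining inequality of $\mu_{\mathcal{U}}(F,p)$ on the support of $p$. Write $\mu := \mu_{\mathcal{U}}(F,p)$. Since $\hat{I} = \diag(\mathbbm{1}_{\supp p})$, we have
\begin{equation*}
    1 = \|\hat{I}F\hat{u}\|^2 = \sum_{j\colon p_j>0} |(F\hat{u})_j|^2 .
\end{equation*}

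For each $j$ with $p_j>0$, the vector $\hat{u}$ is a unit vector in $\Delta(\mathcal{U})$, so Definition~\ref{local-coherence} gives $|(F\hat{u})_j| \leq \alpha_j$. The definition of $\mu$ gives $\alpha_j \leq \mu\sqrt{p_j}$ for every $j$ with $p_j>0$. Hence $|(F\hat{u})_j|^2 \leq \mu^2 p_j$ on the support of $p$.

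Summing over the support yields
\begin{equation*}
    1 = \sum_{j\colon p_j>0} |(F\hat{u})_j|^2 \leq \mu^2 \sum_{j\colon p_j>0} p_j = \mu^2 .
\end{equation*}
Since $\mu\geq 0$, this gives $\mu \geq 1$.

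No step is a real obstacle. The only point needing care is that $\mu$ is defined as the smallest constant satisfying the bound on $\{p_j>0\}$, so one should check that this infimum is attained, and hence that the inequality $\alpha_j\le \mu\sqrt{p_j}$ holds with $\mu$ itself. It does, because each constraint $\alpha_j \le c\sqrt{p_j}$ is closed in $c$. Alternatively, one can avoid the issue by running the argument with any admissible constant $c$ and concluding $c\ge 1$, which passes to the infimum. Finiteness of $\mu$ is implicit in the statement; if $\mu=\infty$ the conclusion is trivial.
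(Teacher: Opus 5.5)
Your proof is correct and follows essentially the paper's argument. The paper introduces $\beta_j=(F\hat u)_j$ and the smallest $B$ with $|\beta_j|\le B\sqrt{p_j}$ on $\supp p$, notes that $\mu_{\mathcal U}(F,p)\ge B$, and then squares and sums to get $1=\|\hat I F\hat u\|^2\le B^2$; you run the same chain directly through $\alpha_j$, and your conclusion $\mu\ge 1$ is stated more accurately than the paper's ``$B=1$''.
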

\begin{proof}
    Consider $\beta_j = (F\hat{u})_j$. We have that $\mu_{\mathcal{U}}(F,p)\geq B$ for the smallest $B$ that verifies
    \begin{equation*}
        \beta_j \leq B\sqrt{p_j} \quad \forall j\colon p_j>0,
    \end{equation*}
    we can take the squared norm on both sides to obtain
    \begin{equation*}
        \| \hat{I} F\hat{u} \| \leq B,
    \end{equation*}
    since we look for the smallest constant that verifies the inequality and $\| \hat{I} F\hat{u} \|=1$ we conclude that $B=1$ and $\mu_{\mathcal{U}}(F,p) \geq B =1$
\end{proof}

\begin{lemma}\label{admiss>1}
   Let $F$ be a unitary operator on $\lnn$, and let $\mathcal{U} \subseteq \lnn$ be a finite union of convex cones contained in a finite-dimensional subspace. If $p$ is an admissible probability distribution on $\mathbb{N}$, then$$\mu_{\mathcal{U}}(F,p) \geq 1.$$
\end{lemma}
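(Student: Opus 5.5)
We prove Lemma \ref{admiss>1} by reducing it to Lemma \ref{non-admiss>1}: we exhibit a unit vector $\hat{u}\in\Delta(\mathcal{U})$ with $\|\hat I F\hat u\|=1$. We tacitly assume $\mathcal{U}\neq\{0\}$, since otherwise $\alpha\equiv 0$, the minimal constant is $0$, and the statement fails. This nondegeneracy is implicit in the paper's setting.

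Fix any unit vector $\hat u\in\Delta(\mathcal{U})$. Because $F$ is unitary, $\|F\hat u\|=1$. It remains to show that $F\hat u$ has no mass outside $\supp p$, that is, $(F\hat u)_j=0$ whenever $p_j=0$.

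For such a $j$, admissibility gives $\alpha_j\le \mu_{\mathcal U}(F,p)\sqrt{p_j}=0$, so $\alpha_j=0$. By Definition \ref{local-coherence}, $|(F\hat u)_j|\le\alpha_j=0$. Hence $\hat I^\perp F\hat u=0$, and so $\|\hat I F\hat u\|=\|F\hat u\|=1$. Lemma \ref{non-admiss>1} then yields $\mu_{\mathcal U}(F,p)\ge 1$.

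As a cross-check, there is also a direct route. Squaring the admissibility inequality $|(F\hat u)_j|\le \alpha_j\le \mu_{\mathcal U}(F,p)\sqrt{p_j}$ and summing over $j$ gives
\begin{equation*}
1=\|F\hat u\|^2\le \mu_{\mathcal U}(F,p)^2\sum_{j} p_j=\mu_{\mathcal U}(F,p)^2.
\end{equation*}

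The only delicate point is the positivity convention. The argument relies on a nonzero vector existing in $\Delta(\mathcal U)$. With that assumption in place, both routes are a few lines.
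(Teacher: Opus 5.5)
Your proposal is correct and follows essentially the same route as the paper: you reduce to Lemma~\ref{non-admiss>1} by noting that admissibility forces $\alpha_j=0$, hence $(F\hat{u})_j=0$, whenever $p_j=0$, so that $\|\hat{I}F\hat{u}\|=\|F\hat{u}\|=1$ for every unit vector $\hat{u}\in\Delta(\mathcal{U})$. Your caveat that $\Delta(\mathcal{U})\neq\{0\}$ is needed is also accurate, and the paper's own argument (``this holds for all of them'') tacitly relies on the same nondegeneracy.
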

\begin{proof}
    By Lemma~\ref{non-admiss>1}, we only need to show there exists $\hat{u} \in \Delta(\mathcal{U})$ with $\| \hat{u} \|=1$ such that $\| \hat{I}F\hat{u} \| = 1$, but this holds for all of them. If $(F\hat{u})_j \neq 0$, then by definition $\alpha_j > 0$. The admissibility of $p$ guarantees that $p_j > 0$ for all such $j$, meaning the projection $\hat{I}$ acts as the identity on $F\hat{u}$. Since $F$ is a unitary operator, we conclude $\|\hat{I}F\hat{u}\| = \|F\hat{u}\| = \|\hat{u}\| = 1$. 
\end{proof}

Intuitively, to guarantee $\mu_{\mathcal{U}}(F,p) \geq 1$, it suffices for the sampling distribution to place its support on a set of coordinates whose cumulative squared coherence is at least $1$. This ensures the measurement operator provides sufficient coverage of the generator's range, as formalized in the following lemma.

\begin{lemma}\label{coherence-energy-bound}
    Given $F$ unitary in $\lnn$, $\mathcal{U}\subseteq\lnn$ a finite union of convex cones contained in a finite-dimensional subspace, and a probability distribution $p$ with $\hat{I}$ the projection on the support of $p$. If we have 
    \begin{equation}\label{magg-1}
        \| \hat{I} \alpha \| \geq 1,
    \end{equation}
    then $\mu_{\mathcal{U}}(F,p) \geq 1$.
\end{lemma}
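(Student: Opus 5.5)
The plan is to argue directly from the definition of $\mu_{\mathcal{U}}(F,p)$. Write $\mu := \mu_{\mathcal{U}}(F,p)$. By definition, $\alpha_j \le \mu\sqrt{p_j}$ for every $j$ with $p_j>0$, that is, for every $j$ in the support of $p$. Squaring and summing over the support gives
\begin{equation*}
\|\hat{I}\alpha\|^2 = \sum_{j\colon p_j>0} \alpha_j^2 \le \mu^2 \sum_{j\colon p_j>0} p_j = \mu^2 .
\end{equation*}
Both sides are finite. The left-hand side is finite because $\alpha\in\lnn$ by Lemma~\ref{alpha-in-l2}, since $\mathcal{U}$ is contained in a finite-dimensional subspace. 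The right-hand side is finite because $p$ is a probability distribution.

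Combining this with hypothesis \eqref{magg-1} gives $1\le\|\hat{I}\alpha\|\le\mu$, which is the claim.

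The only step needing care is the well-definedness of $\mu$ as a finite constant, and whether the defining inequality holds at $\mu$ itself. The set of admissible constants is $\{B \ge 0 : \alpha_j\le B\sqrt{p_j}\ \forall j\in\supp p\}$. This set is closed, since it is an intersection of closed half-lines, so its infimum $\mu$ is attained and satisfies all the inequalities.

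If no finite $B$ works, for instance because $\alpha_j/\sqrt{p_j}$ is unbounded on the support, then $\mu=+\infty$ and the conclusion holds trivially. I would mention this case in one line.

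No other obstacle is expected. The argument is just the energy comparison between $\alpha$ restricted to the support and the total mass of $p$. It parallels the computation in Lemma~\ref{non-admiss>1}, with $\alpha$ in place of $F\hat u$.
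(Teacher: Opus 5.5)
Your proposal is correct and matches the paper's proof: both square the defining inequality $\alpha_j \le \mu_{\mathcal{U}}(F,p)\sqrt{p_j}$ on $\supp p$, sum it, and use $\sum_{j\in\supp p} p_j = 1$ to get $\|\hat{I}\alpha\| \le \mu_{\mathcal{U}}(F,p)$. Your remarks on the infimum being attained and on the case $\mu=+\infty$ are harmless refinements the paper leaves implicit, and the appeal to Lemma~\ref{alpha-in-l2} is not needed, since the summed inequality already holds in $[0,\infty]$.
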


\begin{proof}
  
From the definition of $ \mu_{\mathcal{U}}(F,p) $, if we denote with $\Omega$ the support of $p$ we have
    \begin{equation*}
        \alpha_j^2 \leq \mu_{\mathcal{U}}(F,p)^2 p_j \quad \forall j \in \Omega.
    \end{equation*}
    Summing this inequality over all indices in $\Omega$:
    \begin{equation*}
        \sum_{j \in \Omega} \alpha_j^2 \leq \sum_{j \in \Omega} \mu_{\mathcal{U}}(F,p)^2 p_j = \mu_{\mathcal{U}}(F,p)^2 \sum_{j \in \Omega} p_j.
    \end{equation*}
    Therefore,
    \begin{equation*}
        \sum_{j \in \Omega} \alpha_j^2 \leq \mu_{\mathcal{U}}(F,p)^2.
    \end{equation*}
    Taking the square root implies
    \begin{equation*}
        \mu_{\mathcal{U}}(F,p) \geq \sqrt{\sum_{j \in \Omega} \alpha_j^2}.
    \end{equation*}
    Thus, if \eqref{magg-1} holds, it follows directly that $\mu_{\mathcal{U}}(F,p) \geq 1$.
\end{proof}

Finally we prove how we can characterize the best probability distribution for signal recovery using coherence.
\begin{proof}[Proof of Lemma \ref{best-admissible}]
    We simply have to observe that $\alpha_j^2 \leq \mu_{\mathcal{U}}(F,p)^2 p_j$ for all $j\in\N$ and then compute
    \begin{equation*}
        \sum_{j\in\mathbb{N}} \alpha_j^2 \leq \sum_{j\in\mathbb{N}} \mu_{\mathcal{U}}(F,p)^2 p_j
    \end{equation*}
    which implies $\|\alpha\|^2 \leq \mu_{\mathcal{U}}(F,p)^2 $.
\end{proof}

\subsection{Generalized Restricted Isometry Property}
We now present two results establishing that our sensing operator preserves the geometry of the signal set. This behavior is analogous to the Restricted Isometry Property (RIP) in classical compressed sensing; accordingly, we refer to it as the Generalized Restricted Isometry Property (Gen-RIP). 

\begin{definition}[Generalized Restricted Isometry Property]\label{def:gen-rip}
Let $\mathcal{K} \subseteq \lnn$ be a signal set of interest and let $\mathcal{F}$ be a unitary operator and $p$ a probability distribution on $\mathbb{N}$ with associated operators $S$, $D$, and $\hat{I}$. We say that $\mathcal{A}$ satisfies the Generalized Restricted Isometry Property (Gen-RIP) over $\mathcal{K}$ with constant $\delta \in (0, 1)$ if
\[
\sup _{{x} \in \mathcal{K} , \|x\|=1}\left|\frac{1}{m}\|S D F {x}\|^2-\| \hat{I} F {x}\|^2\right| \leq \delta 
\]
\end{definition}

To establish this property for our sensing operator, we begin with an auxiliary result.

\begin{lemma}[Matrix Bernstein { \cite[Theorem 5.4.1]{bern}}]\label{bern}
Let $X_1, \dots, X_N$ be independent, mean zero, $n \times n$ symmetric random matrices, such that $\|X_i\| \le K$ almost surely for all $i$. Then, for every $t \ge 0$, we have
\[
\mathbb{P} \left\{ \left\| \sum_{i=1}^N X_i \right\| \ge t \right\} \le 2n \exp \left( - \frac{t^2/2}{\sigma^2 + Kt/3} \right),
\]
\textit{where $\sigma^2 = \left\| \sum_{i=1}^N \mathbb{E} X_i^2 \right\|$.}
\end{lemma}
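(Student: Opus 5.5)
The plan is the matrix Laplace transform method in the form developed by Tropp, which is the standard proof of this statement. Write $Y=\sum_{i=1}^N X_i$. Since $\|Y\|=\max\{\lambda_{\max}(Y),\lambda_{\max}(-Y)\}$ and the $-X_i$ satisfy the same hypotheses, it suffices to prove
\[
\mathbb{P}\{\lambda_{\max}(Y)\ge t\}\le n\exp\left(-\frac{t^2/2}{\sigma^2+Kt/3}\right)
\]
and then take a union bound, which produces the factor $2n$. For any $\theta>0$, Markov's inequality applied to $e^{\theta\lambda_{\max}(Y)}$, together with $e^{\theta\lambda_{\max}(Y)}=\lambda_{\max}(e^{\theta Y})\le \operatorname{tr} e^{\theta Y}$, gives
\[
\mathbb{P}\{\lambda_{\max}(Y)\ge t\}\le e^{-\theta t}\,\mathbb{E}\operatorname{tr}\exp(\theta Y).
\]

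The key step is to decouple the expectation of the trace exponential of a sum of independent matrices. The scalar identity $e^{a+b}=e^ae^b$ fails for non-commuting matrices, so instead I invoke Lieb's concavity theorem. It says that $A\mapsto \operatorname{tr}\exp(H+\log A)$ is concave on positive definite matrices. Conditioning on all summands but one, applying Jensen, and iterating over $i=1,\dots,N$ yields
\[
\mathbb{E}\operatorname{tr}\exp\Big(\sum_i \theta X_i\Big)\le \operatorname{tr}\exp\Big(\sum_i \log \mathbb{E} e^{\theta X_i}\Big).
\]
This is the main obstacle: Golden--Thompson only handles two factors, so Lieb's theorem is used as a black box.

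Next comes a semidefinite bound on each matrix moment generating function. For real $|x|\le K$ and $0<\theta<3/K$, I would use $q!\ge 2\cdot 3^{q-2}$ to get
\[
e^{\theta x}\le 1+\theta x+\frac{\theta^2x^2/2}{1-\theta K/3}.
\]
Transferring this scalar inequality to $X_i$ through its spectral decomposition and taking expectations with $\mathbb{E}X_i=0$ gives $\mathbb{E}e^{\theta X_i}\preceq I+g(\theta)\mathbb{E}X_i^2$, where $g(\theta)=\frac{\theta^2/2}{1-\theta K/3}$. The logarithm is operator monotone and $\log(I+A)\preceq A$, so $\log\mathbb{E}e^{\theta X_i}\preceq g(\theta)\mathbb{E}X_i^2$. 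Monotonicity of $\operatorname{tr}\exp$ in the semidefinite order then gives
\[
\operatorname{tr}\exp\Big(g(\theta)\sum_i\mathbb{E}X_i^2\Big)\le n\,e^{g(\theta)\sigma^2}.
\]

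Combining these steps, $\mathbb{P}\{\lambda_{\max}(Y)\ge t\}\le n\exp(-\theta t+g(\theta)\sigma^2)$ for every $0<\theta<3/K$. I then choose $\theta=t/(\sigma^2+Kt/3)$, which indeed satisfies $\theta<3/K$. With this choice $1-\theta K/3=\sigma^2/(\sigma^2+Kt/3)$, so $g(\theta)\sigma^2=\theta t/2$. The exponent therefore becomes $-\theta t/2=-\frac{t^2/2}{\sigma^2+Kt/3}$. The symmetric argument for $-Y$ completes the proof.
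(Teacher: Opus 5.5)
Your proposal is correct. It is essentially the standard Lieb-based matrix Laplace-transform argument (Tropp's method) used in the cited reference (Vershynin, Theorem 5.4.1); the paper itself gives no proof and only imports the result. One small detail to add: the choice $\theta=t/(\sigma^2+Kt/3)$ satisfies $0<\theta<3/K$ only when $t>0$ and $\sigma^2>0$. The remaining cases are trivial, since $t=0$ makes the right-hand side at least $1$, and $\sigma^2=0$ forces every $X_i=0$ almost surely.
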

Now we give the first results that applies to general sets $\mathcal{U}$.
\begin{lemma}\label{lemma-rip-gen}
    Let $F$ be a unitary operator on $\lnn$, and let $\mathcal{U} \subseteq \lnn$ be a $r$-dimensional subspace. Let $p$ be a probability distribution on $\mathbb{N}$ with associated operators $S$, $D$, and $\hat{I}$, such that $\mu_{\mathcal{U}}(F,p) \geq 1$. Then, for $\varepsilon\in(0,1)$ and $\delta \in (0,1]$, we have that
    \begin{equation*}
        \mathbb{P} \left[ \sup _{{x} \in \mathcal{U} , \|x\|=1}\left|\frac{1}{m}\|S D F {x}\|^2-\| \hat{I} F {x}\|^2\right| \leq \delta \right] \geq 1-\varepsilon
    \end{equation*}
    provided
    \begin{equation*}\label{rate}
        m \geq \frac{4\mu_{\mathcal{U}}(F,p)^2}{\delta^2}\log\left( \frac{2r}{\varepsilon}\right).
    \end{equation*}
    \begin{proof}
        Since $\mathcal{U} \subset \lnn$ is an $r$-dimensional subspace, we can fix an orthonormal basis $(\psi_i)_{i\in[r]}$ for it. We then define the operator $P_\mathcal{U}\colon \lnn \to \mathbb{C}^r$ by
        \begin{equation*}
            (P_\mathcal{U}(x))_i = \langle x , \psi_i \rangle
        \end{equation*}
        We observe that $P_\mathcal{U}^* P_\mathcal{U} = \Pi_\mathcal{U}$, i.e. $x=P_\mathcal{U}^* P_\mathcal{U} x$ for all $x \in \mathcal{U}$. Let us now compute
        \begin{align*}
            &\sup _{{x} \in \mathcal{U}, \|x\|=1}\left|\frac{1}{m}\|SDF{x}\|^2- \| \hat{I} F {x}\|^2 \right|\\
            &\quad =\sup _{{x} \in \mathcal{U} , \|x\|=1}\left|\frac{1}{m}\left\|SDFP_{\mathcal{U}}^* P_{\mathcal{U}} {x}\right\|^2- \| \hat{I} F P_{\mathcal{U}}^* P_{\mathcal{U}} {x}\|^2\right| \\
            &\quad  =\sup _{{u} \in \mathbb{C}^r \cap \mathbb{S}^{r-1}}\left|\frac{1}{m}\left\|SDFP_{\mathcal{U}}^* {u}\right\|^2-\| \hat{I} F P_{\mathcal{U}}^* {u}\|^2\right| \\
            &\quad  =\frac{1}{m} \sup _{{u} \in \mathbb{C}^r \cap \mathbb{S}^{r-1}}\left|{u}^*\left[\left(SDFP_{\mathcal{U}}^*\right)^*\left(SDFP_{\mathcal{U}}^*\right)-m \left( \hat{I} F P_{\mathcal{U}}^*\right)^*\left( \hat{I} F P_{\mathcal{U}}^*\right)\right] {u}\right|.
        \end{align*}
        We can write this as the operator norm
        \begin{equation*}\label{summ}
            \frac{1}{m}\left\|P_{\mathcal{U}} F^* D S^* S D F P_{\mathcal{U}}^*-m P_{\mathcal{U}} F^*\hat{I} F P_{\mathcal{U}}^*\right\| = \frac{1}{m}\left\|\sum_{i=1}^m\left[P_{\mathcal{U}} F^* \left[ D s_i s_i^* D - \hat{I} \right]F P_{\mathcal{U}}^*\right]\right\|.
        \end{equation*}
        First we observe that
        \begin{equation*}
            \mathbb{E}[D s_i s_i^* D] = \hat{I}.
        \end{equation*}
        To simplify the notation, for each $i \in [m]$, we define the random vector $v_i \in \mathbb{C}^r$ as
\begin{equation*}
v_i := P_{\mathcal{U}} F^* D s_i.
\end{equation*}
Since $D$ is self-adjoint, the outer product is exactly $v_i v_i^* = P_{\mathcal{U}} F^* D s_i s_i^* D F P_{\mathcal{U}}^*$. This allows us to express the operator sum as
\begin{equation*}
P_{\mathcal{U}} F^* D S^* S D F P_{\mathcal{U}}^* = \sum_{i=1}^m v_i v_i^*.
\end{equation*}
        
      This allows us to write the expectation of each term in the sum as:
      \begin{equation}\notag
      \mathbb{E}[v_i v_i^* - P_{\mathcal{U}} F^*\hat{I} F P_{\mathcal{U}}^*] = P_{\mathcal{U}} F^* \mathbb{E}[D s_i s_i^* D - \hat{I}] F P_{\mathcal{U}}^* = 0.
      \end{equation}
      
      Consequently, the random matrices comprising the sum are mean-zero. Next, we bound their operator norms almost surely. Let us evaluate the Euclidean norm of $v_i$. Suppose the random vector $s_i$ realizes as the canonical basis vector $e_j$ for some $j \in \mathbb{N}$. We then have:
      \begin{align*}\|v_i \| &= \| P_\mathcal{U} F^* D e_j \| \\ 
      &= \frac{1}{\sqrt{p_j}} \| P_\mathcal{U} F^* e_j \| \\
      &= \frac{1}{\sqrt{p_j}} \sup_{x\in \mathcal{U} \cap B} |\langle x, F^* e_j \rangle| \\
      &\leq \mu_{\mathcal{U}}(F,p) \eqqcolon \mu,
      \end{align*}
      where the inequality follows directly from the definition of the local coherence $\alpha_j$ and the sampling parameter $\mu_{\mathcal{U}}(F,p)$. Since this bound holds for any realization $e_j$, we have $\|v_i\| \le \mu$ almost surely. We can now bound the operator norm of each mean-zero matrix:
      \begin{equation*}\| v_i v_i^* - P_{\mathcal{U}} F^* \hat{I} F P_{\mathcal{U}}^*\| \leq \|v_i v_i^*\| + \| P_{\mathcal{U}} F^* \hat{I} F P_{\mathcal{U}}^* \| \leq \mu^2 + 1 \leq 2\mu^2,
      \end{equation*}
      where we used the rank-one identity $\|v_i v_i^*\| = \|v_i\|^2 \leq \mu^2$, the operator norm bound $\| P_{\mathcal{U}} F^*\hat{I} F P_{\mathcal{U}}^* \| \leq 1$, and our assumption that $\mu \ge 1$.

      Then we compute
\begin{align*}
    \sigma^2 & = \left\|\sum_{i=1}^m \mathbb{E}\left[\left({v}_i {v}_i^* - P_{\mathcal{U}} F^*\hat{I} F P_{\mathcal{U}}^*\right)^2\right]\right\| \\
    & = \sup _{{u} \in \mathbb{C}^r \cap \mathbb{S}^{r-1}} \left\langle{u}, \sum_{i=1}^m\left(\mathbb{E}\left[{v}_i {v}_i^* {v}_i {v}_i^*\right] - \left(P_{\mathcal{U}} F^*\hat{I} F P_{\mathcal{U}}^*\right)^2 \right) {u}\right\rangle \\
    & \leq \sup _{{u} \in \mathbb{C}^r \cap \mathbb{S}^{r-1}} \left\langle{u}, \sum_{i=1}^m\left\|{v}_i\right\|^2 \mathbb{E}\left[{v}_i {v}_i^*\right] {u}\right\rangle - \sum_{i=1}^m \left\| P_{\mathcal{U}} F^*\hat{I} F P_{\mathcal{U}}^* {u}\right\|^2 \\
    & \leq \sup _{{u} \in \mathbb{C}^r \cap \mathbb{S}^{r-1}} \sum_{i=1}^m \mu^2 \left\langle{u}, \mathbb{E}\left[{v}_i {v}_i^*\right] {u}\right\rangle \\
    & \leq \mu^2 m.
\end{align*}
The second equality holds because the operator inside the norm is positive semi-definite, meaning its operator norm coincides with the supremum of its quadratic form over the unit sphere. The subsequent inequalities follow by dropping the non-positive subtracted term and applying the bound $\|\mathbb{E}[v_i v_i^*]\| = \|P_{\mathcal{U}} F^*\hat{I} F P_{\mathcal{U}}^*\| \leq 1$.

From this, we apply the Matrix Bernstein inequality (Lemma~\ref{bern}) with dimension $r$, variance parameter $\sigma^2 \leq m\mu^2$, and spectral bound $L \leq 2\mu^2$, obtaining:
\begin{equation*}
    \mathbb{P}\left\{\left\|\sum_{i=1}^m\left[P_{\mathcal{U}} F^* \left[ D s_i s_i^* D - \hat{I} \right]F P_{\mathcal{U}}^*\right]\right\| \geq \gamma \right\} \leq 2 r \exp \left(-\frac{\gamma^2 / 2} {m\mu^2 + \frac{2}{3} \mu^2 \gamma}\right).
\end{equation*}
This bound translates to the operator norm over the subspace. Since $P_{\mathcal{U}}^*$ is an isometry from $\mathbb{C}^r$ to $\mathcal{U}$, the condition above is equivalent to:
\begin{equation*}
    \mathbb{P}\left\{\sup _{{x} \in \mathcal{U}, \|x\|=1}\left|\frac{1}{m}\|SDF{x}\|^2- \| \hat{I} F {x}\|^2 \right| \geq \frac{\gamma}{m} \right\} \leq 2 r \exp \left(-\frac{\gamma^2 / 2} {m\mu^2 + \frac{2}{3} \mu^2 \gamma}\right).
\end{equation*}
Substituting $\delta = \frac{\gamma}{m}$, the exponent becomes:
\begin{equation*}
    -\frac{(m\delta)^2 / 2} {m\mu^2 + \frac{2}{3} \mu^2 m\delta} = -\frac{m\delta^2 } {2\mu^2 (1+ \frac{2}{3} \delta)}.
\end{equation*}
Assuming $\delta \leq 1$, we have $(1+\frac{2}{3}\delta) \leq 5/3 < 2$, which implies $\frac{1}{2(1+\frac{2}{3}\delta)} > \frac{1}{4}$. This yields the simplified bound:
\begin{equation*}
    \mathbb{P}\left\{\sup _{{x} \in \mathcal{U}, \|x\|=1}\left|\frac{1}{m}\|SDF{x}\|^2- \| \hat{I} F {x}\|^2 \right| \geq \delta \right\} \leq 2 r \exp \left(-\frac{m\delta^2 } {4\mu^2 }\right).
\end{equation*}
Finally, requiring the right-hand side to be at most $\varepsilon$,
\begin{equation*}
    2 r \exp \left(-\frac{m\delta^2 } {4\mu^2 }\right) \leq \varepsilon,
\end{equation*}
we conclude that the condition holds provided that
\begin{equation*}
    m \geq \frac{4\mu_{\mathcal{U}}(F,p)^2}{\delta^2}\log\left( \frac{2r}{\varepsilon}\right). \qedhere
\end{equation*}
    \end{proof}
\end{lemma}
We now present our main Generalized Restricted Isometry Property (Gen-RIP) result, which establishes that the restricted isometry holds for signals within the range of a $(k,d)$-generalized generative network.
\begin{theorem}[Gen-RIP]\label{gen-rip}
    Let $F$ be a unitary operator on $\ell^2(\mathbb{N})$ and let $p$ be a probability distribution on $\mathbb{N}$ with associated operators $S$, $D$, and $\hat{I}$. Let $G$ be a $(k, d)$-generalized generative network and $\mathcal{T}= R(G) - R(G)$. For $\varepsilon>0$, $\delta \in (0,1)$, assume the number of measurements satisfies
  $$m \ge \frac{4\mu_{\mathcal{T}}(F,p)^2}{\delta^2} \left[ 2kd \log\left(\frac{2e k_d}{k}\right) + \log\left(\frac{4k}{\epsilon}\right) \right],$$
    where we assume $\mu_{\mathcal{T}}(F,p)\geq 1$.
    Then, with probability at least $1-\varepsilon$, the following holds:
    \begin{equation*}
        \sup_{x\in \mathcal{T}, \|x\| =1 } \left| \frac 1 m\|S D F \boldsymbol{x}\|^2-\| \hat{I} F \boldsymbol{x}\|^2\right| \le \delta
    \end{equation*}
\begin{proof}
    By Lemma~\ref{remark-T}, $\mathcal T$ is a collection of at most $N^2$ $2k$-dimensional sub-spaces. Therefore we can write $\mathcal{T} = \bigcup_{j=1}^N \mathcal{L}_j$, and apply on each $\mathcal{L}_j$ Lemma~\ref{lemma-rip-gen} with $\varepsilon'>0$ and obtain
    \[
    \begin{aligned}
        & \mathbb{P}\left\{\sup _{x \in \mathcal{T}, \|x\|=1}\left|\frac 1 m\|S D F \boldsymbol{x}\|^2-\| \hat{I} F \boldsymbol{x}\|^2\right| \geq \delta\right\} \\
        & \quad \leq \sum_{j=1}^N \mathbb{P}\left\{\sup _{x \in \mathcal{L}_j ,\|x\|=1}\left| \frac 1 m\|S D F \boldsymbol{x}\|^2-\| \hat{I} F \boldsymbol{x}\|^2\right|\geq \delta\right\} \\
        & \quad \leq N \varepsilon' .
    \end{aligned}
    \]
    To find the requested rate for $m$,  we set $\epsilon' = \epsilon / N^2$ in Lemma~\ref{rate}, which requires:$$m \ge \frac{4\mu_{\mathcal{U}}(F,p)^2}{\delta^2} \log\left(\frac{2k N}{\epsilon}\right) = \frac{4\mu_{\mathcal{U}}(F,p)^2}{\delta^2} \left[ \log N^2 + \log\left(\frac{4k}{\epsilon}\right) \right].$$
    
    Substituting the bound $\log N \le kd \log\left(\frac{2e k_d}{k}\right)$ from Lemma~\ref{lemma-sottospazi} yields the desired sample complexity.    
\end{proof}
\end{theorem}
\subsection{Recovery bounds}
Now we have everything we need to prove the main theorem and corollary.
\begin{proof}[Proof of Theorem \ref{thm:main-recovery-merged}]
    Recall $x^{\perp} := x_0 - \Pi_{R(G)}(x_0)$. By the definition of the estimator and the triangle inequality, we have
    $$
    \begin{aligned}
        \|SDF \hat{x} - b\| & \leq \min _{x \in R(G)}\|SDF x - b\| + \hat{\varepsilon} \\
        & \leq \left\|SDF \Pi_{R(G)}(x_0) - b\right\| + \hat{\varepsilon} \\
        & = \left\|SDF x^{\perp} + \eta\right\| + \hat{\varepsilon} \\
        & \leq \left\|SDF x^{\perp}\right\| + \|\eta\| + \hat{\varepsilon}.
    \end{aligned}
    $$
    By assumption, the number of measurements $m$ is sufficiently large such that, by Corollary \ref{gen-rip}, the measurement operator $SDF$ satisfies the restricted isometry property over the difference set $\mathcal{T} = \Delta(R(G))$ with constant $\delta = 1/2$, with probability at least $1-\varepsilon$. Therefore, since the difference vector $\hat{x} - \Pi_{R(G)}(x_0) \in \Delta(R(G))$, we obtain
    $$
    \begin{aligned}
        \|SDF \hat{x} - b\| & = \left\|SDF\left(\hat{x} - \Pi_{R(G)}(x_0)\right) - SDF x^{\perp} - \eta\right\| \\
        & \geq \left\|SDF\left(\hat{x} - \Pi_{R(G)}(x_0)\right)\right\| - \left\|SDF x^{\perp}\right\| - \|\eta\| \\
        & \geq \sqrt{\frac{m}{2}}\left\|\hat{I}F \left(\hat{x} - \Pi_{R(G)}(x_0)\right)\right\| - \left\|SDF x^{\perp}\right\| - \|\eta\|.
    \end{aligned}
    $$
    Assembling the two inequalities gives
    $$
    \left\|\hat{I}F\left(\hat{x} - \Pi_{R(G)}(x_0)\right)\right\| \leq \sqrt{\frac{2}{m}}\left[2\left\|SDF x^{\perp}\right\| + 2\|\eta\| + \hat{\varepsilon}\right].
    $$
    Applying the triangle inequality and utilizing the unitarity of $F$, we derive the general bound:
    $$
    \begin{aligned}
        \left\|\hat{x} - x_0\right\| & \leq \left\|x_0 - \Pi_{R(G)}(x_0)\right\| + \left\|\hat{x} - \Pi_{R(G)}(x_0)\right\| \\
        & \leq \left\|x^{\perp}\right\| + \left\|\hat{I}F(\hat{x} - \Pi_{R(G)}(x_0))\right\| + \left\|\hat{I}^\perp F(\hat{x} - \Pi_{R(G)}(x_0))\right\| \\
        & \leq \left\|x^{\perp}\right\| + \frac{2\sqrt{2}}{\sqrt{m}}\left\|SDF x^{\perp}\right\| + \frac{2\sqrt{2}}{\sqrt{m}}\|\eta\| + \frac{\sqrt{2}}{\sqrt{m}}\hat{\varepsilon} + \left\|\hat{I}^\perp F(\hat{x} - \Pi_{R(G)}(x_0))\right\|.
    \end{aligned}
    $$
    This proves \eqref{equazione-madre}.
    
    For the second part, assume $p$ is admissible. We observe that $\hat{I}^\perp F u = 0$ for all $u \in \Delta(R(G))$. This holds because if the local coherence satisfies $\alpha_j > 0$, admissibility dictates that $p_j > 0$, implying the orthogonal projection component $\hat{I}^\perp_{j,j} = 0$. Conversely, if $\alpha_j = 0$, the $j$-th component is strictly zero by the definition of local coherence. Since $\hat{x} - \Pi_{R(G)}(x_0) \in \Delta(R(G))$, the tail term $\left\|\hat{I}^\perp F(\hat{x} - \Pi_{R(G)}(x_0))\right\|$ vanishes entirely, yielding \eqref{bound-admissible-simp}.
\end{proof}

\begin{proof}[Proof of Corollary \ref{cor:balancing-bound}]
    We simply need to observe that
    \begin{equation*}
        \left\|\hat{I}^\perp F(\hat{x}-\Pi_{R(G)}\left(x_0\right))\right\| = \left\|\hat{I}^\perp F\Pi_{R(G)}(\hat{x}-x_0)\right\| \leq \theta \| \hat{x} - x_0 \|,
    \end{equation*}
    simply substituting in \eqref{equazione-madre} we obtain the thesis.
\end{proof}
\section{Conclusions}\label{sec:conclusions} 
In this work we have established a theoretical foundation for infinite dimensional generative compressed sensing. By properly formulating the problem in $\ell^2(\mathbb{N})$ and accounting for the discretization gap we have shown that generative priors can be effectively used for functional recovery.
We introduced the concept of infinite dimensional local coherence and utilized it to derive optimal sampling strategies.
Our numerical experiments on Darcy flow validated these theoretical insights demonstrating that coherence based sampling significantly reduces the number of measurements required for accurate reconstruction.

Furthermore our empirical results revealed a valuable interaction between model resolution and sampling density. We found that in data scarce regimes employing a lower resolution generator with explicit upscaling provides a form of implicit regularization that stabilizes the inverse problem. Our findings suggest that the optimal generative prior is not necessarily the one with the highest fidelity, but rather the one that best matches the spectral content of the available measurements. By limiting resolution, we leverage the spectral bias of the network \cite{rahaman2019spectral} to stabilize the inversion in data-scarce regimes.

While our theoretical framework is robust, it relies on the assumption that the generator employs piecewise linear activations (e.g., ReLU) to ensure the range is contained within a union of finite-dimensional polyhedral cones. Extending this theory to networks with smooth activations, such as GeLU or SiLU, remains an open theoretical challenge. Additionally, while our theory provides error bounds in the presence of measurement noise, our numerical validations were primarily focused on the noiseless regime. Future work could explore the empirical robustness of this framework under varying noise levels and distributions. Finally, translating these infinite-dimensional Gen-RIP guarantees to other practical measurement operators (especially non-unitary), such as the Radon transform for continuous computed tomography, is the natural next step.

\section*{Acknowledgments}
This material is based upon work supported by the Air Force Office of Scientific Research under award numbers FA8655-23-1-7083. The research was supported in part by the MIUR Excellence Department Project awarded to Dipartimento di Matematica, Università di Genova, CUP D33C23001110001.  Co-funded by Regional Problem FSE+ (point 1.2 of attach. IX of Reg. (UE) 1060/2021). The research by MS has been supported by the MUR grants PRIN P2022XT498, funded by the European Commission under the NextGeneration EU programme. MS is a member of the ``Istituto Nazionale di Alta Matematica''.

\section*{Conflict of interest}
		The authors declare that they have no conflict of interest.

\bibliographystyle{plain}
\bibliography{cit}
\end{document}